\newcommand{\R}{\mathbb R}
\newcommand{\N}{\mathbb N}
\newcommand{\E}{\mathbb E}
\newcommand{\Pro}{\mathbb P}
\newcommand{\conv}{{\rm conv}}
\newcommand{\inter}{{\rm int}}
\newcommand{\inner}[2]{ \ensuremath { \langle {#1},{#2} \rangle } }
\newtheorem{thm}{Theorem}[section]
\newtheorem{lemma}{Lemma}[section]
\newtheorem{proposition}{Proposition}[section]
\theoremstyle{definition}
\newtheorem{rmk}{Remark}[section]
\begin{document}


\title{On mean outer radii of random polytopes}

\author{D. Alonso-Guti\'{e}rrez}
\address{Departamento de Matem\'aticas, Universidad de Murcia, Campus de
Espinar\-do, 30100-Murcia, Spain} \email{davidalonso@um.es; mhcifre@um.es}

\author{N. Dafnis}
\address{Department of Mathematics, Texas A$\&$M University, College
Station, TX 77843 U.S.A.} \email{nikdafnis@gmail.com}

\author{M. \'{A}. Hern\'andez Cifre}

\author{J. Prochno}
\address{Institute of Analysis, Johannes Kepler University Linz,
Altenbergerstr. 69, A-4040 Linz, Austria} \email{joscha.prochno@jku.at}

\keywords{Mean Outer Radii, Random Polytope, Isotropic Constant}
\subjclass[2000]{Primary 52A22, Secondary 52A23, 05D40}

\thanks{Part of this work was done while the first, second and fourth
authors were postdoctoral fellows at the Department of Mathematical and
Statistical Sciences at the University of Alberta in Edmonton under the
supervision of N. Tomczak-Jaegermann and A. Litvak. Second author is
supported by the Action ``Supporting Postdoctoral Researchers'' of the
Operational Program ``Education and Lifelong Learning'' (Action's
Beneficiary: General Secretariat for Research and Technology), and is
co-financed by the European Social Fund (ESF) and the Greek State. First
and third authors are partially supported by MICINN project MTM2010-16679,
MICINN-FEDER project MTM2009-10418 and ``Programa de Ayudas a Grupos de
Excelencia de la Regi\'on de Murcia'', Fundaci\'on S\'eneca,
04540/GERM/06. Fourth author is supported by the Austrian Science Fund (FWF) within the
FWF project P23987 ``Projection operators in
Analysis and geometry of classical Banach spaces''.}

\begin{abstract}
In this paper we introduce a new sequence of quantities
for random polytopes. Let $K_N=\conv\{X_1,\dots,X_N\}$ be a random polytope generated by
independent random vectors uniformly distributed in an isotropic convex
body $K$ of $\R^n$. We prove that the so-called $k$-th mean outer radius
$\widetilde R_k(K_N)$ has order $\max\{\sqrt{k},\sqrt{\log N}\}L_K$ with
high probability if $n^2\leq N\leq e^{\sqrt{n}}$. We also show that this
is also the right order of the expected value of $\widetilde R_k(K_N)$ in
the full range $n\leq N\leq e^{\sqrt{n}}$.
\end{abstract}

\maketitle

\section{Introduction and Notation}

The study of random polytopes began with Sylvester and the famous
four-point problem nearly 150 years ago and it represents the starting
point for an extensive study. In 1963, R\'enyi and Sulanke \cite{RSu}
continued it in their innovative work, studying expectations of various
basic functionals of random polytopes. Important quantities are
expectations, variances and distributions of those functionals. The tools
combine a variety of mathematical disciplines such as convex geometry,
geometric analysis or geometric probability (see also \cite{Ba,Re} and the
references therein).

Random polytopes appear in many important applications and other fields,
{\it e.g.}, in statistics (extreme points of random samples), convex
geometry (approximation of convex sets), but also in computer science in
the analysis of the average complexity of algorithms \cite{PSh},
optimization \cite{B}, and even in biology \cite{S}. These are several
reasons why in the last 30 years random polytopes have gained more and
more interest.

Important and very recent developments in the study of random polytopes
can be found in \cite{KK}, where the authors prove that the isotropic
constant of a Gaussian random polytope is bounded with high probability,
and in \cite{A,AP1,AP2,DGG,DGT1,DGT2,PP} where the authors study the
relation between several parameters of a random polytope in an isotropic
convex body and the isotropic constant of the body.

The main goal of this paper is to estimate a new sequence of quantities
for random polytopes defined by $N$ random points in an isotropic convex
body in $\R^n$, namely, a certain family of outer radii. In the last
years, the so-called successive outer radii have been intensively studied,
{\it e.g.} their geometric and analytical properties, size for special
bodies, relation with other measures, and computational aspects (see {\it
e.g.} \cite{BH92,Br05,GHC,GHCH,GK2,HHC,HHC2,Pu} and the references
inside). These families of quantities are defined as the maximum/minimum
of the (classical) circumradii of the projections of a convex body onto
all $k$-dimensional subspaces. If we restrict to the family of polytopes,
there is not too much information (see {\it e.g.} \cite{BrT06,GK2} and, in
particular, \cite{Br05}, where the precise values of -among others- the
successive outer radii of the regular cube, cross-polytope and simplex can
be found).

So, it is a natural generalization to consider a kind of mean outer radii.
In the context of random polytopes, we will define them as the outer
radius of the projection of the random polytope onto a subspace $F$ of
dimension $k$, averaged over the Grassmannian manifold $G_{n,k}$, $1\leq k
\leq n$, with respect to the Haar probability measure, and we will prove
that, with high probability, this quantity has order
$\max\{\sqrt{k},\sqrt{\log N}\}L_K$, where $L_K$ is the isotropic constant
of $K$.

\medskip

We will work in $\R^n$ equipped with a Euclidean structure $\langle
\cdot,\cdot \rangle$. We denote by $|\cdot|$ the corresponding Euclidean
norm, as well as the volume ({\it i.e.}, the Lebesgue measure) in $\R^n$.
A {\it convex body} $K\subset\R^n$ is a compact convex set with non-empty
interior, and it is called ({\it centrally}) \textit{symmetric} if $-x\in
K$, whenever $x\in K$. We write $B_2^n$ to denote the Euclidean unit ball,
$S^{n-1}=\bigl\{x\in\R^n:|x|=1\bigr\}$ for the unit sphere in $\R^n$ and
$\sigma$ for the uniform probability measure on $S^{n-1}$. Moreover, if
$F\subset\R^n$ is a $k$-dimensional linear subspace, we denote by
$S^{k-1}_F$ and $\sigma_F$ the corresponding unit sphere and probability
measure in $F$, respectively.

We use the notation $a\simeq b$ to denote that there exist absolute
constants $c_1>0$, $c_2>0$ such that $c_1a\leq b\leq c_2 a$.

A convex body $K$ is \textit{isotropic} if $|K|=1$, its centroid lies at
the origin, {\it i.e.}, $\int_K x\,dx=0$, and it satisfies the
\textit{isotropic condition}
\[
\int_K\langle x,\theta\rangle^2dx=L_K^2,\quad\text{ for all }\;\theta\in
S^{n-1},
\]
where $L_K$ is a constant depending only on $K$, the so-called
\textit{isotropic constant} of $K$. We refer to \cite{Gi,MP} for further
background information and results on isotropic convex bodies.

For a given convex body $K\subset\R^n$ and independent random vectors
$X_1,\dots,X_N$, uniformly distributed in $K$, we denote the corresponding
random polytopes by
\[
K_N=\conv\{X_1,\dots,X_N\}\quad\text{and}\quad S_N=\conv\{\pm
X_1,\ldots,\pm X_N\}.
\]
Moreover, if $0\in\inter K$, the \textit{outer radius} of $K$ is defined
as the quantity
\[
R(K)=\min\left\{t>0:\,K \subseteq tB_2^n\right\}=\max_{x\in K} |x|.
\]
We observe that if $K$ is symmetric, then $R(K)$ coincides with the
classical {\it circumradius} of $K$, namely,
$\min\{R:\exists\,x\in\R^n\text{ with } K\subseteq x+RB_2^n\}$.

This magnitude leads to the definition of the main concept of this paper.
For a convex body $K\subseteq\R^n$, the {\it $k$-th mean outer radius} of
$K$, $1\leq k\leq n$, is defined as
\[
\widetilde{R}_k(K)=\int_{G_{n,k}}R(P_FK)\, d\nu_{n,k}(F),
\]
where $G_{n,k}$ denotes the set of $k$-dimensional linear subspaces of
$\R^n$, $\nu_{n,k}$ is the unique Haar probability measure on $G_{n,k}$
invariant under orthogonal maps, and $P_FK$ is the orthogonal projection
of $K$ onto $F$. We will write $d\nu_{F,k}$ if we work with the
Grassmannian manifold $G_{F,k}$ restricted to a fixed subspace $F$.
Clearly $\widetilde{R}_n(K)$ is the outer radius of $K$ and, moreover, if
$K$ is symmetric, then $\widetilde{R}_1(K)=\omega(K)$ is the mean width of
$K$ and $\widetilde{R}_n(K)$ its classical circumradius. We also observe
that for the polytopes $K_N$ and $S_N$ and every $F\in G_{n,k}$, $1\leq
k\leq n$, we have
\begin{equation*}\label{eq04}
R(P_F K_N)=R(P_F S_N)=\max_{1\leq j \leq N} |P_F X_j|.
\end{equation*}
Hence, all results in this paper are valid for both $K_N$ and $S_N$, and
so it suffices to deal only with one of them, say $K_N$.

\medskip

The main result of this paper is an asymptotic formula which gives the
right order for the mean outer radii of a random polytope lying in an
isotropic convex body, with high probability:

\begin{thm}\label{THM estimate high brob}
Let $K\subset\R^n$ be an isotropic convex body and $K_N$ be a random
polytope generated by independent random vectors $X_1,\dots,X_N$,
uniformly distributed in $K$. If $n\leq N\leq e^{\sqrt n}$ then, for all
$1\leq k\leq n$ and $s>0$,
\begin{equation}\label{EQ estimate high brob1}
c_1(s)\max\left\{\sqrt k,\sqrt{\log\frac{N}{n}}\right\}L_K \leq
\widetilde{R}_k(K_N)\leq c_2(s)\max\left\{\sqrt k,\sqrt{\log N}\right\}L_K
\end{equation}
with probability greater than $1-N^{-s}$, where $c_1(s)$ and $c_2(s)$ are
positive absolute constants depending only on $s$. In particular, if
$n^2\leq N\leq e^{\sqrt n}$, then both the upper and the lower bounds have
the same order, {\it i.e.},
\begin{equation}\label{EQ estimate high brob2}
\widetilde{R}_k(K_N)\simeq\max\left\{\sqrt k,\sqrt{\log N}\right\}L_K.
\end{equation}
\end{thm}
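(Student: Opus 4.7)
The starting observation is the identity
\[
\widetilde R_k(K_N)=\int_{G_{n,k}}\max_{1\le j\le N}|P_FX_j|\,d\nu_{n,k}(F),
\]
so the problem reduces to estimating a Grassmannian average of a maximum of projected vertices. The plan is to treat the two inequalities of \eqref{EQ estimate high brob1} separately, each being reduced to a known fact about isotropic random polytopes of the type cited in the introduction: the Paouris large-deviation and small-ball estimates for $|X|\simeq\sqrt n\,L_K$, and the mean-width estimate $\omega(K_N)\gtrsim\sqrt{\log(N/n)}\,L_K$ valid when $n\le N\le e^{\sqrt n}$.

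For the upper bound I would use the $L_q$-method on $G_{n,k}$. By Jensen and a union bound,
\[
\widetilde R_k(K_N)\le\Bigl(\int_{G_{n,k}}\max_j|P_FX_j|^q\,d\nu_{n,k}(F)\Bigr)^{1/q}\le N^{1/q}\max_j\Bigl(\int_{G_{n,k}}|P_FX_j|^q\,d\nu_{n,k}(F)\Bigr)^{1/q}.
\]
The central ingredient is the moment estimate
\[
\Bigl(\int_{G_{n,k}}|P_Fx|^q\,d\nu_{n,k}(F)\Bigr)^{1/q}\le c\,\sqrt{(k+q)/n}\,|x|,
\]
which follows from the fact that $|P_Fx|^2/|x|^2$ is $\mathrm{Beta}(k/2,(n-k)/2)$ distributed together with a Stirling estimate on the resulting Gamma ratios. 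Taking $q=\log N$ (so that $N^{1/q}=e$) and combining with the large-deviation bound $\max_j|X_j|\le c(s)\sqrt n\,L_K$ with probability $\ge 1-N^{-s}$ (this is where the restriction $N\le e^{\sqrt n}$ enters), one arrives at $\widetilde R_k(K_N)\le c'(s)\max\{\sqrt k,\sqrt{\log N}\}L_K$ on that event.

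For the lower bound I would split according to which of the two terms dominates. The $\sqrt{\log(N/n)}$ part rests on the trivial inequality $R(L)\ge\omega(L)$ applied to each $P_FK_N$; averaging over $F$ and using the identity $\int_{G_{n,k}}\omega(P_FK_N)\,d\nu_{n,k}(F)=\omega(K_N)$ reduces the claim to the mean-width lower bound cited above. The $\sqrt k$ part is obtained by discarding all but one vertex:
\[
\widetilde R_k(K_N)\ge\int_{G_{n,k}}|P_FX_1|\,d\nu_{n,k}(F)\gtrsim\sqrt{k/n}\,|X_1|,
\]
with the last bound a deterministic consequence of the Beta distribution, and then closing with Paouris' small-ball estimate $|X_1|\ge c\sqrt n\,L_K$ with probability $\ge 1-N^{-s}$. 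The asymptotic equivalence \eqref{EQ estimate high brob2} follows at once from \eqref{EQ estimate high brob1} and the elementary observation that $\log(N/n)\ge\tfrac12\log N$ whenever $N\ge n^2$.

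The main technical obstacle, as I see it, is securing the Grassmannian moment estimate with the correct $\sqrt{(k+q)/n}$ scaling: a cruder bound would lose a logarithm in the regime $\log N\gg k$ and spoil the threshold behaviour at $q\simeq k$ that makes the $\max\{\sqrt k,\sqrt{\log N}\}$ appear. The remaining work is essentially bookkeeping: intersecting the probabilistic events on which the background isotropic estimates hold with probability $\ge 1-N^{-s}$, and verifying that the resulting pointwise combinations give both sides of \eqref{EQ estimate high brob1} uniformly in $1\le k\le n$.
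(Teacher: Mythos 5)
Your proposal is correct, but it takes a genuinely different route from the paper's in both bounds, so a comparison is worthwhile. For the upper bound the paper (Lemma \ref{lemma3}) compares $R(P_FK_N)$ with $I_q(K,F)$ by Markov's inequality and a union bound for each fixed $F$, controls the Grassmannian integral of the ratio by a layer-cake argument capped at $(n+1)/\sqrt k$, and then evaluates $\bigl(\int_{G_{n,k}}I_q(K,F)^q\,d\nu_{n,k}(F)\bigr)^{1/q}\simeq\sqrt{(k+q)/(n+q)}\,I_q(K)$ (Lemma \ref{lemma4}) before invoking \eqref{eq.Paouris1}; you instead absorb the union bound into the factor $N^{1/q}$, make the Grassmannian step a deterministic single-vector moment estimate, and let Paouris' deviation inequality control $\max_j|X_j|$ (where $N\le e^{\sqrt n}$ enters exactly as you say). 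This works, and the estimate you single out as the main obstacle, $\bigl(\int_{G_{n,k}}|P_Fx|^q\,d\nu_{n,k}(F)\bigr)^{1/q}\simeq\sqrt{(k+q)/(n+q)}\,|x|$, is precisely the pointwise-in-$x$ content of the proof of Lemma \ref{lemma4} (via the formula of \cite{P0}, or your Beta-distribution computation), so it is not a gap. For the $\sqrt k$ part of the lower bound the paper runs the dual scheme with negative moments (Lemmas \ref{lemma.low2} and \ref{lemma.low3}, Proposition \ref{prop.paouris3} and \eqref{eq.Paouris2}); your one-vertex argument, $R(P_FK_N)\ge|P_FX_1|$, $\int_{G_{n,k}}|P_FX_1|\,d\nu_{n,k}(F)\gtrsim\sqrt{k/n}\,|X_1|$, followed by the small-ball estimate with a constant $\varepsilon=\varepsilon(s)$ (probability $1-\varepsilon^{c\sqrt n}\ge 1-N^{-s}$ since $\log N\le\sqrt n$), is simpler and loses nothing for this statement; what the paper's heavier machinery buys is essentially an averaged, body-level version of the same inequality, not a better bound here. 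The one point you should tighten is the source for the $\sqrt{\log(N/n)}$ part: the mean-width result of \cite{AP2} quoted in the paper is only an estimate in expectation, so to get a bound holding with probability $1-N^{-s}$ you should, as the paper does, use the volume estimate of \cite{DGT2}, $\bigl(|K_N|/|B_2^n|\bigr)^{1/n}\ge c(s)\sqrt{\log(N/n)}\,L_K$ with probability $1-N^{-s}$, combined with Urysohn's inequality to pass to $\widetilde R_1(K_N)$; your averaging identity $\int_{G_{n,k}}\omega(P_FK_N)\,d\nu_{n,k}(F)=\omega(K_N)$ then plays the role of the paper's monotonicity Lemma \ref{lemma.increasing}. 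With that citation fixed, intersecting the events and adjusting $s$ gives \eqref{EQ estimate high brob1}, and your deduction of \eqref{EQ estimate high brob2} from $\log(N/n)\ge\tfrac12\log N$ for $N\ge n^2$ coincides with the paper's.
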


In order to prove Theorem \ref{THM estimate high brob}, we modify some
arguments appearing in \cite{DGT1,DGT2}, and use in a crucial way two
strong inequalities of Paouris (see \cite{P1,P2}). We observe (see Remark
\ref{rem_extending}) that we cannot expect to extend the range for $N$
from above in estimate \eqref{EQ estimate high brob2} with high
probability. On the other hand, even though we can extend this range of
$N$ from below, we cannot obtain this estimate with high probability when
$N$ is proportional to $n$ with the techniques we use. However, this is
feasible if the expectation is involved:
\begin{thm}\label{THM estimate expectation}
Let $K\subset\R^n$ be an isotropic convex body and $K_N$ be a random
polytope generated by independent random vectors $X_1,\dots,X_N$,
uniformly distributed in $K$. Then, for $n\leq N\leq e^{\sqrt n}$ and all
$1\leq k\leq n$,
\begin{equation*}
\E\widetilde{R}_k(K_N)\simeq\max\left\{\sqrt k,\sqrt{\log N}\right\}L_K.
\end{equation*}
\end{thm}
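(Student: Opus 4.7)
I would prove the upper and lower bounds separately, and within the lower bound split off the $\sqrt k$ and $\sqrt{\log N}$ contributions.

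For the upper bound, apply Theorem~\ref{THM estimate high brob} with, say, $s=2$: on an event of probability at least $1-N^{-2}$ we have $\widetilde R_k(K_N)\le c_2(2)\max\{\sqrt k,\sqrt{\log N}\}L_K$. On the complementary event, use the deterministic estimate $\widetilde R_k(K_N)\le R(K_N)\le R(K)\le cnL_K$, the classical circumradius bound for an isotropic convex body. Splitting the expectation and using $N\ge n$ yields
\begin{equation*}
\E\widetilde R_k(K_N)\le c_2(2)\max\left\{\sqrt k,\sqrt{\log N}\right\}L_K+cnL_K\cdot N^{-2},
\end{equation*}
and since $N\ge n$ the second term is at most $cL_K/n$, which is absorbed into the first.

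For the $\sqrt kL_K$ part of the lower bound, notice that $X_1\in K_N$ forces $P_FX_1\in P_FK_N$, so $R(P_FK_N)\ge|P_FX_1|$ for every $F\in G_{n,k}$. By the isotropic condition, $\E|P_FX_1|^2=kL_K^2$, and Borell's lemma applied to the seminorm $x\mapsto|P_Fx|$ on the log-concave random vector $X_1$ yields the reverse moment comparison $\E|P_FX_1|\ge c(\E|P_FX_1|^2)^{1/2}=c\sqrt kL_K$. Integrating over $F\in G_{n,k}$ gives $\E\widetilde R_k(K_N)\ge c\sqrt kL_K$.

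For the $\sqrt{\log N}L_K$ part, a nested-Grassmannian Fubini argument together with the monotonicity $R(P_{F_1}K_N)\le R(P_{F_2}K_N)$ for $F_1\subseteq F_2$ shows that $\widetilde R_k(K_N)$ is non-decreasing in $k$, so it suffices to prove $\E\widetilde R_1(K_N)\ge c\sqrt{\log N}L_K$. Now $\widetilde R_1(K_N)=\widetilde R_1(S_N)=\int_{S^{n-1}}\max_j|\langle X_j,\theta\rangle|\,d\sigma(\theta)$ is precisely the mean width $\omega(S_N)$ of the symmetrized random polytope, and the lower bound $\E\omega(S_N)\gtrsim\sqrt{\log N}L_K$ in the range $n\le N\le e^{\sqrt n}$ is already established in the literature on quermassintegrals of random polytopes in isotropic convex bodies (see \cite{DGT1,DGT2}). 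The main obstacle is precisely this $\sqrt{\log N}$ piece: a direct per-direction estimate breaks down, since for bodies such as the cube and directions like the coordinate axes the marginal $\langle X,\theta\rangle$ is bounded and so $\E\max_j|\langle X_j,\theta\rangle|$ does not grow with $N$. What rescues the bound is the averaging over the sphere: the thin-shell/central-limit behaviour of marginals of isotropic log-concave measures forces most directions to have approximately Gaussian marginals, and the integrated maximum then attains the desired $\sqrt{\log N}$ scaling.
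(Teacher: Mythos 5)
Your overall decomposition (upper bound via the high--probability estimate plus a deterministic bound on the exceptional event, lower bound split into a $\sqrt k\,L_K$ part and a $\sqrt{\log N}\,L_K$ part via monotonicity in $k$) is the same as the paper's. Your upper bound is exactly the paper's argument (the paper takes $s=1$ and $R(K)\leq (n+1)L_K$; the constant choices are immaterial). Your $\sqrt k\,L_K$ part is a genuinely different and in fact more elementary route: bounding $R(P_FK_N)\geq |P_FX_1|$, using $\E|P_FX_1|^2=kL_K^2$ and Borell's lemma to reverse the moment comparison, and integrating over $G_{n,k}$ is correct, works for every $k$ and every $N$, and avoids the paper's detour through the high--probability bound \eqref{lowerboundsqrtk}, which rests on Paouris' negative--moment inequalities and only applies directly when $\log N<k$. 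That piece of your proposal is a nice simplification.

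The gap is in the $\sqrt{\log N}\,L_K$ part. The estimate $\E\widetilde{R}_1(K_N)\gtrsim\sqrt{\log N}\,L_K$ in the full range $n\leq N\leq e^{\sqrt n}$ is \emph{not} contained in \cite{DGT1,DGT2}: the methods there (and the volume bound the present paper borrows from \cite{DGT2} in the proof of Theorem \ref{THM estimate high brob}) produce lower bounds of order $\sqrt{\log(N/n)}\,L_K$, which is exactly why Theorem \ref{THM estimate high brob} needs $N\geq n^2$ to have matching upper and lower bounds, and why Remark \ref{rem_extending} states that for $N\simeq n$ ``more refined arguments are needed.'' When $N$ is proportional to $n$, $\sqrt{\log(N/n)}$ is bounded and gives nothing of order $\sqrt{\log N}\simeq\sqrt{\log n}$. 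The missing ingredient is precisely equation \eqref{David-Joscha}, proved in \cite{AP2} by a new method based on the Gaussian behaviour of marginals of isotropic log-concave measures --- which is the averaging/CLT mechanism you describe heuristically in your last sentences but do not prove. So as written your argument establishes the theorem only for, say, $N\geq n^{1+\delta}$; to cover $N\simeq n$ you must either cite \cite{AP2} (as the paper does) or actually carry out that marginal--CLT argument, which is a substantial proof in its own right rather than a consequence of \cite{DGT1,DGT2}.
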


We mention here that using a new method, in \cite{AP2} the authors
established a sharp lower estimate for the expectation of the mean width
of a (symmetric) random polytope inside an isotropic convex body, for the
range $N\simeq n$. We will use their result in the proof of Theorem
\ref{THM estimate high brob} in order to show Theorem \ref{THM estimate
expectation}.

\medskip

The paper is organized as follows. In Section \ref{SEC preli} we state
additional notation and known results that we will use throughout the
paper. Section \ref{SEC lemmas} is devoted to show several technical
lemmas which will be needed in the proofs of the theorems. Finally, in
Section \ref{SEC main} we prove Theorems \ref{THM estimate high brob} and
\ref{THM estimate expectation}. We conclude the paper showing in Section
\ref{SEC gaussian} the following result, in the flavor of Theorem \ref{THM
estimate expectation}, for Gaussian random polytopes:
\begin{thm}\label{thm gaussian}
Let $X_1,\dots,X_N$ be independent standard Gaussian random vectors in
$\R^n$, $n\leq N$, and let $K_N=\conv\{X_1,\dots,X_N\}$. Then, for all
$1\leq k\leq n$,
\[
\E\widetilde{R}_k(K_N)\simeq\max\left\{\sqrt k,\sqrt{\log N}\right\}.
\]
\end{thm}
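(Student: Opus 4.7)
The plan is to exploit the rotational invariance of the standard Gaussian distribution to reduce the problem to a single $k$-dimensional subspace. For any $F\in G_{n,k}$, the projected vectors $P_FX_1,\dots,P_FX_N$ are i.i.d.\ standard Gaussian in $F$, so the distribution of
\[
R(P_FK_N)=\max_{1\leq j\leq N}|P_FX_j|
\]
does not depend on $F$. Since the integrand in the definition of $\widetilde R_k$ is a nonnegative random variable, Fubini gives
\[
\E\widetilde{R}_k(K_N)=\int_{G_{n,k}}\E\max_{1\leq j\leq N}|P_FX_j|\,d\nu_{n,k}(F)=\E\max_{1\leq j\leq N}|G_j|,
\]
where $G_1,\dots,G_N$ are i.i.d.\ standard Gaussian vectors in $\R^k$. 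The whole theorem thus reduces to the well-known asymptotic estimate $\E\max_{j\leq N}|G_j|\simeq\max\{\sqrt k,\sqrt{\log N}\}$.

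For the upper bound I would use the Gaussian concentration of the $1$-Lipschitz map $x\mapsto|x|$ to obtain $\Pro(|G_j|\geq\sqrt k+t)\leq e^{-t^2/2}$ for $t>0$, since $\E|G_j|\leq\sqrt k$. A union bound yields $\Pro(\max_j|G_j|\geq\sqrt k+t)\leq Ne^{-t^2/2}$, and integrating this tail from $0$ to $\infty$ while splitting at $t_0=\sqrt{2\log N}$ produces $\E\max_j|G_j|\leq\sqrt k+C\sqrt{\log N}\leq 2C\max\{\sqrt k,\sqrt{\log N}\}$.

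For the lower bound I would combine two elementary facts. First, $\E\max_j|G_j|\geq\E|G_1|\simeq\sqrt k$, which is just the mean of a $\chi_k$ random variable. Second, fixing any $\theta\in S^{k-1}$, the scalars $\langle G_j,\theta\rangle$ are i.i.d.\ standard normal and $|G_j|\geq|\langle G_j,\theta\rangle|$, so
\[
\E\max_{1\leq j\leq N}|G_j|\geq\E\max_{1\leq j\leq N}|\langle G_j,\theta\rangle|\simeq\sqrt{\log N},
\]
using the classical estimate for the expected maximum of $N$ i.i.d.\ Gaussians (whose upper bound is again union bound, and whose lower bound follows from the standard normal lower tail together with an independence argument). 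Adding the two estimates gives $\E\max_j|G_j|\gtrsim\max\{\sqrt k,\sqrt{\log N}\}$, completing the proof.

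I do not anticipate any genuine obstacle here: the rotational invariance of the Gaussian measure eliminates the technical difficulty that makes Theorems \ref{THM estimate high brob} and \ref{THM estimate expectation} hard, namely the dependence of the law of $P_FX_j$ on $F$ when $X_j$ is only assumed to be uniform in an isotropic convex body. Once the reduction is carried out, everything is a standard computation with chi-distributed random variables, and no appeal to Paouris' inequalities or to the results of \cite{AP2} is required.
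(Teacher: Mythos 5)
Your proposal is correct, and its first step---using rotational invariance of the Gaussian measure plus Fubini to reduce $\E\widetilde R_k(K_N)$ to $\E\max_{1\leq j\leq N}|G_j|$ for i.i.d.\ standard Gaussians $G_j$ in $\R^k$---is exactly the reduction the paper makes (it states the theorem is a direct consequence of Proposition \ref{prop gaussian}). Where you diverge is in the proof of that proposition. The paper proves it by hand: it computes $\Pro\bigl(\max_j|G_j|\leq t\bigr)$ in polar coordinates, controls $\int_t^\infty r^k e^{-r^2/2}\,dr$ by an ad hoc lemma, invokes Stirling's formula for $|B_2^k|$, and for the lower bound argues that the resulting probability tends to $0$, so the estimate is only asserted for $n\geq n_0$. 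You instead get the upper bound from concentration of the $1$-Lipschitz norm (using $\E|G_j|\leq\sqrt k$) together with a union bound and integration of the tail, and the lower bound from the two elementary minorants $\E\max_j|G_j|\geq\E|G_1|\simeq\sqrt k$ and $\E\max_j|G_j|\geq\E\max_j|\langle G_j,\theta\rangle|\simeq\sqrt{\log N}$ (the latter needs $N\geq 2$, which is harmless since for $N=1$ the $\sqrt{\log N}$ term vanishes). Both routes are valid; yours is shorter, uses only standard Gaussian facts, and holds uniformly in $k$ and $N$ without the $n\geq n_0$ caveat, while the paper's explicit computation is self-contained (the authors note they could not locate a reference beyond the one-dimensional case) and produces quantitative tail bounds for $\max_j|G_j|$ rather than only the expectation.
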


\section{Additional notation and background results}\label{SEC preli}

The following definition can be found in \cite{P1} (see also \cite{P2}). Let $K$ be
a convex body in ${\mathbb R}^n$ with volume one. For every
$-n<p<+\infty$, $p\neq 0$, and every $F\in G_{n,k}$, the $p$-moment of the
Euclidean norm of the projection of $K$ onto $F$ is defined as
\begin{equation*}
I_p(K,F)=\left(\int_K|P_Fx|^p\,dx\right)^{1/p}.
\end{equation*}
If $F=\R^n$ we write
\begin{equation*}
I_p(K)=I_p(K,\R^n)=\left(\int_K|x|^p\,dx\right)^{1/p}.
\end{equation*}
In \cite{P1}, Paouris proved a sharp reverse H\"{o}lder type inequality
for the moments of the Euclidean norm of an isotropic convex body $K$ in
${\mathbb R}^n$:
\begin{equation}\label{eq.Paouris1}
I_q(K)\simeq I_2(K)=\sqrt{n}L_K,\quad\text{ for all }\;1\leq
q\leq\sqrt{n}.
\end{equation}
This inequality provides a sharp concentration result for the mass
distribution in an isotropic convex body in $\R^n$, {\it i.e.}, for any $t>0$,
\[
\Pro\bigl(|x|\geq ct\sqrt{n}\,L_K\bigr)\leq e^{-\sqrt{n}\,t}.
\]
We will need this result for the proof of the upper bound in Theorem
\ref{THM estimate high brob}.

In \cite{P2}, the same author got the following small ball probability
result for isotropic random vectors, {\it i.e.}, centered random vectors
$X$ verifying that $\E\langle X,\theta\rangle^2=1$ for any $\theta\in
S^{n-1}$:
\begin{thm}[Paouris, \cite{P2}]
Let $X$ be an isotropic log-concave random vector in $\R^n$. Let $M$ be a
non-zero $n\times n$ matrix, $y\in\R^n$ and $\varepsilon\in (0,1)$. Then
\[
\Pro\bigl(|MX-y|\leq c_1\varepsilon\Vert M\Vert_{{\rm HS}}\bigr)\leq
\varepsilon^{c_2\frac{\Vert M\Vert_{{\rm HS}}}{\Vert M\Vert_{{\rm op}}}},
\]
where $c_1,c_2$ are positive absolute constants and $\Vert M\Vert_{{\rm
HS}},\Vert M\Vert_{{\rm op}}$ denote the Hilbert-Smith and the operator
norm of $M$, respectively.
\end{thm}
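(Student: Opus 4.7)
The plan is to deduce the inequality from a matching upper estimate on a negative moment of $|MX-y|$, coupled with Markov's inequality. Concretely, the goal reduces to proving
\[
\E|MX-y|^{-p}\leq\bigl(C/\Vert M\Vert_{{\rm HS}}\bigr)^{p}\quad\text{for every }p\leq c\,\Vert M\Vert_{{\rm HS}}/\Vert M\Vert_{{\rm op}},
\]
with absolute constants $C,c>0$. Indeed, writing
\[
\Pro\bigl(|MX-y|\leq t\bigr)=\Pro\bigl(|MX-y|^{-p}\geq t^{-p}\bigr)\leq t^{p}\,\E|MX-y|^{-p},
\]
and then taking $t=c_{1}\varepsilon\Vert M\Vert_{{\rm HS}}$ and $p=c_{2}\Vert M\Vert_{{\rm HS}}/\Vert M\Vert_{{\rm op}}$ would yield the stated small-ball bound.

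As a first simplification I would pass to the singular value decomposition $M=U\Sigma V^{T}$, with $\Sigma=\mathrm{diag}(s_{1},\dots,s_{n})$ and $s_{i}\geq 0$. Orthogonal invariance of the Euclidean norm gives $|MX-y|=|\Sigma V^{T}X-U^{T}y|$, and $X':=V^{T}X$ is again a centered, isotropic, log-concave random vector. After replacing $y$ by $U^{T}y$, I may therefore assume $M=\Sigma$ is diagonal, so that $\Vert M\Vert_{{\rm HS}}^{2}=\sum_{i}s_{i}^{2}$, $\Vert M\Vert_{{\rm op}}=\max_{i}s_{i}$, and the ratio $r:=\Vert M\Vert_{{\rm HS}}/\Vert M\Vert_{{\rm op}}$ plays the role of an effective rank of $M$.

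To control the negative moment I would exploit the integral representation
\[
\E|MX-y|^{-p}=\frac{1}{\Gamma(p/2)}\int_{0}^{\infty}t^{p/2-1}\,\E\,e^{-t|MX-y|^{2}}\,dt,
\]
thereby reducing the problem to estimating the Laplace transform $\varphi(t):=\E\,e^{-t|MX-y|^{2}}$. In the diagonal model $\varphi(t)=\E\,e^{-t\sum_{i}(s_{i}X'_{i}-y'_{i})^{2}}$, and the quasi-independence of the coordinates of the isotropic log-concave vector $X'$, captured by Paouris' theory of the $L_{q}$-centroid bodies $Z_{q}(X')$, should give $\varphi(t)\leq\exp(-c_{0}t\Vert M\Vert_{{\rm HS}}^{2})$ in the regime $t\lesssim 1/\Vert M\Vert_{{\rm op}}^{2}$, mirroring the Gaussian benchmark. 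When $p\lesssim r$, the Gamma integral above concentrates on exactly this range of $t$, and a direct computation delivers the required $(C/\Vert M\Vert_{{\rm HS}})^{p}$ bound.

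The principal obstacle is to establish the Laplace-transform estimate in the presence of the translation $y$, since $MX-y$ is no longer centered and the standard isotropic identities break down. Paouris' resolution exploits two facts: first, every one-dimensional marginal $\langle MX-y,\theta\rangle$ has a log-concave density bounded by an absolute constant on the correct scale, which yields a shift-invariant scalar small-ball estimate; second, the log-concavity of $X$ transfers to the function $x\mapsto|Mx-y|^{2}$ strongly enough to drive a Paouris-style inductive argument along the $Z_{q}$-body scales. Assembling these two ingredients into the quantitative Laplace bound above is, to my mind, the technical crux.
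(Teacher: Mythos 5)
First, a point of reference: the paper does not prove this statement at all. It is Paouris's small-ball theorem, imported verbatim from \cite{P2} as a background result, so the only ``proof'' in the paper is the citation, together with the remark that Paouris's argument runs through a reverse H\"older inequality for negative moments ($I_{-q}(K)\simeq I_2(K)$ for $1\leq q\leq\sqrt n$, displayed as \eqref{eq.Paouris2}). Your opening reduction is correct and does match how the result is actually derived in \cite{P2}: Markov's inequality applied to $|MX-y|^{-p}$ converts a bound of the form $\E|MX-y|^{-p}\leq\bigl(C/\Vert M\Vert_{\rm HS}\bigr)^{p}$, valid for $p\leq c\,\Vert M\Vert_{\rm HS}/\Vert M\Vert_{\rm op}$, into the stated small-ball estimate, and the SVD reduction to diagonal $M$ is harmless since isotropy and log-concavity are preserved under orthogonal maps.

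However, what you have written is a roadmap, not a proof: the entire mathematical content of the theorem is the negative-moment estimate, and you do not establish it. The Laplace-transform bound $\E\,e^{-t|MX-y|^{2}}\leq\exp\bigl(-c_{0}t\Vert M\Vert_{\rm HS}^{2}\bigr)$ is asserted on the strength of ``quasi-independence of the coordinates'' and ``Paouris' theory of the $L_q$-centroid bodies'', but for a general isotropic log-concave vector the coordinates are not independent, the Laplace transform does not factor, and no such bound follows from log-concavity by any soft argument; it is essentially equivalent in strength to the negative-moment inequality you are trying to prove. This is precisely where \cite{P2} invests all of its work: the asymptotic formula $I_{-q}(K,F)\simeq\sqrt{k/q}\,w_{-q}\bigl(P_FZ_q(K)\bigr)$ (recorded in this paper as Proposition~\ref{prop.paouris3}), lower bounds for $w_{-q}(Z_q(K))$ via the parameter $q^{*}$ and $\ell^{*}$-type estimates, and a separate argument to absorb the translation $y$ and a non-scalar $M$. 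Your closing paragraph concedes that assembling these ingredients ``is the technical crux'', i.e., the crux is left undone; as it stands, the proposal establishes only the easy and standard equivalence between small-ball probabilities and negative moments.
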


In order to show the theorem, the author proved a reverse H\"{o}lder
inequality for the negative moments of the Euclidean norm on an isotropic
convex body $K\subset\R^n$:
\begin{equation}\label{eq.Paouris2}
I_{-q}(K)\simeq I_2(K)=\sqrt{n}\,L_K, \quad\text{ for all }\;1\leq
q\leq\sqrt{n}.
\end{equation}

For the proof of the lower bound in Theorem \ref{THM estimate high brob}
we need some more background. For a convex body $K\subset\R^n$,
$0\in\inter K$, and every $-\infty<p<+\infty$, $p\neq 0$, we define
\begin{equation*}\label{def.p-meanw idth}
w_p(K)=\left(\int_{S^{n-1}}h_K(\theta)^p\,d\sigma(\theta)\right)^{1/p},
\end{equation*}
where $h_K(\theta)=\max_{x\in K}\inner{x}{\theta}$ denotes the support
function of $K$ in the direction $\theta\in S^{n-1}$. If $|K|=1$, then for
all $q\geq 1$, the $L_q$-centroid body of $K$ is defined to be the
symmetric convex body in $\R^n$ whose support function is given by
\begin{equation*}\label{def.Zq}
h_{Z_q(K)}(\theta)=\left(\int_K\bigl|\inner{x}{\theta}\bigr|^q\,dx\right)^{1/q}.
\end{equation*}
In \cite{P2}, Paouris also established the following asymptotic formula
for the negative moments $I_{-q}(K,F)$ (see \cite[Propositions~5.4 and
4.1]{P2}):
\begin{proposition}[Paouris, \cite{P2}]\label{prop.paouris3}
Let $K\subset\R^n$ be a convex body with $|K|=1$ and centroid at the
origin. Let $k\in\{1,\dots,n\}$ and $F\in G_{n,k}$. Then, for every
integer $q<k$ it holds
\begin{equation*}\label{eq.Paouris3}
I_{-q}(K,F) \simeq \sqrt{\frac{k}{q}}\,w_{-q}\bigl(P_FZ_q(K)\bigr).
\end{equation*}
\end{proposition}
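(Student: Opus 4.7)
The plan is to adapt Paouris's proof in \cite[Propositions~4.1 and 5.4]{P2}, splitting the argument into two stages: (i) a positive-moment companion identity via Fubini and spherical integration, followed by (ii) upgrading to negative moments via reverse H\"older inequalities for log-concave measures. A useful preliminary reduction is to replace $K$ by its marginal on $F$: the projection $P_F X$ of $X$ uniform on $K$ has log-concave density $f_F$ on $F$, and since $\langle x,\phi\rangle=\langle P_F x,\phi\rangle$ for $\phi\in F$, one has $h_{Z_q(K)}(\phi)^q=\int_F|\langle y,\phi\rangle|^q f_F(y)\,dy$. The proposition then becomes a statement $I_{-q}(\mu)\simeq\sqrt{k/q}\,w_{-q}(Z_q(\mu))$ for the log-concave probability measure $\mu=f_F\,dy$ on $\R^k$, and the parameter $k$ takes the role of the ambient dimension.

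For the positive-moment companion $I_q(K,F)\simeq\sqrt{k/q}\,w_q(P_F Z_q(K))$, I would apply Fubini:
\begin{equation*}
\int_{S^{k-1}_F} h_{Z_q(K)}(\phi)^q\, d\sigma_F(\phi)=\int_K\left(\int_{S^{k-1}_F}|\langle P_F x,\phi\rangle|^q\, d\sigma_F(\phi)\right)dx.
\end{equation*}
The inner spherical integral equals $c_{k,q}|P_F x|^q$ with $c_{k,q}=\int_{S^{k-1}_F}|\langle e,\phi\rangle|^q\,d\sigma_F(\phi)$ for any unit $e\in F$. A standard Beta-function computation followed by Stirling gives $c_{k,q}^{1/q}\simeq\sqrt{q/k}$ in the range $1\leq q\leq k$, and taking $q$-th roots yields the positive-moment identity.

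To pass to negative moments I would invoke two reverse H\"older inequalities. On the measure side, Paouris's small-ball theorem (quoted in the excerpt) applied to $M=P_F$ yields $\Pro(|P_F X|\leq c\varepsilon\sqrt{k}\,L_K)\leq\varepsilon^{c\sqrt{k}}$; combined with Borell's reverse H\"older for the log-concave distribution of $|P_F X|$ on $[0,\infty)$, this gives $I_{-q}(K,F)\simeq I_q(K,F)$ for $q$ in the allowed range. On the support-function side, one needs $w_{-q}(P_F Z_q(K))\simeq w_q(P_F Z_q(K))$, which holds because $Z_q(K)$ is a centroid body whose ``$\alpha$-regularity'' (developed in \cite{P1,P2}) makes positive and negative $L_q$-moments of its support function equivalent on $S^{k-1}_F$. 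Chaining these estimates gives
\begin{equation*}
I_{-q}(K,F)\simeq I_q(K,F)\simeq\sqrt{k/q}\,w_q(P_F Z_q(K))\simeq\sqrt{k/q}\,w_{-q}(P_F Z_q(K)).
\end{equation*}

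The main obstacle will be the second reverse H\"older, comparing $w_q(P_F Z_q(K))$ with $w_{-q}(P_F Z_q(K))$: such an equivalence fails for arbitrary symmetric convex bodies and depends crucially on the fine structural properties of $L_q$-centroid bodies from Paouris's earlier work. The first reverse H\"older and the Fubini step are comparatively routine given the tools already cited in the excerpt. An alternative, more direct route would bypass the intermediate positive moments entirely: expand $\int_K|P_F x|^{-q}\,dx$ by a layer-cake decomposition, use Paouris's small-ball estimate to control the singularity at the origin, and then recognize the resulting expression as a spherical average of $h_{Z_q(K)}^{-q}$ via polar coordinates in $F$; this requires more careful bookkeeping of constants through Gamma-function asymptotics but avoids invoking the centroid-body regularity machinery as a black box.
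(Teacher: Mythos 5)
Your reduction to the marginal on $F$ and the positive-moment computation $I_q(K,F)\simeq\sqrt{k/q}\,w_q(P_FZ_q(K))$ are fine, but the passage to negative moments is where the argument breaks down, and the obstacle is more serious than the one you flag. First, the proposition concerns an arbitrary convex body of volume one with centroid at the origin and \emph{every} integer $q<k$, with absolute implied constants; both reverse H\"older inequalities you invoke require isotropy and, even then, are only available for $q\lesssim\sqrt{k}$. Indeed, the small-ball theorem applied to $M=P_F$ gives $\Pro\bigl(|P_FX|\leq c_1\varepsilon\sqrt{k}\,L_K\bigr)\leq\varepsilon^{c_2\sqrt{k}}$, which controls $I_{-q}(K,F)$ only for $q<c_2\sqrt{k}$ (for larger $q$ the layer-cake integral is not summable against this tail bound), and $I_q(K,F)\simeq I_{-q}(K,F)$ genuinely fails beyond $q\simeq\sqrt{k}$: for the isotropic simplex $I_q\simeq\max\{\sqrt{n},q\}L_K$ while $I_{-q}\leq I_2\simeq\sqrt{n}L_K$. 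The range $\sqrt{k}\ll q<k$ is not a technicality here, since Lemma~\ref{lemma.low3} applies the proposition with $q=\log N$ and $k$ possibly as small as $q+1$. Second, the route is circular with respect to \cite{P2}: there the small-ball theorem, the equivalence $I_{-q}\simeq I_q$, and the regularity statement $w_{-q}(Z_q)\simeq w_q(Z_q)$ are all \emph{deduced from} this proposition, not available before it (Borell's lemma gives reverse H\"older only for positive moments). Your fallback of expanding $|P_Fx|^{-q}$ into one-dimensional moments also cannot work: the negative $q$-th moment of a one-dimensional log-concave marginal diverges for every $q\geq1$ because the density is positive at the origin --- which is exactly why the hypothesis is $q<k$.

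The actual argument in \cite{P2} never compares negative with positive moments. It is an integral-geometric identity: one writes $I_{-q}(K,F)^{-q}$, via the exact formula $\int_{\R^k}|y|^{-q}f(y)\,dy=c_{k,q}\int_{G_{F,k-q}}\int_{E}f\,d\nu_{F,k-q}(E)$ applied to the marginal density $f_F$, as an average of $(k-q)$-dimensional ``sections'' of $f_F$; each such section is comparable, by the Ball--Milman--Pajor duality for log-concave functions, to $|P_{E^\perp\cap F}Z_q(\cdot)|^{-1}$ on the complementary $q$-dimensional subspace; and the polar volume formula converts that volume into $w_{-q}$ of the projected centroid body, Stirling's formula producing the factor $\sqrt{k/q}$. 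That chain is exact up to absolute constants for every integer $q<k$ and uses no position assumption, which is precisely what the proposition asserts.
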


\section{Some preliminary lemmas}\label{SEC lemmas}

We state here some preliminary technical results which will be needed in
the proofs of the main theorems. The first observation states the
monotonicity of the family of mean outer radii $\widetilde{R}_k$ in $k$.

\begin{lemma}\label{lemma.increasing}
Let $K\subset\R^n$ be a convex body. Then $\widetilde{R}_{k-1}(K)\leq
\widetilde{R}_k(K)$, $2\leq k\leq n$.
\end{lemma}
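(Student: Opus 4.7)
The plan is to deduce monotonicity by first showing a pointwise inequality of the form $R(P_EK)\leq R(P_FK)$ whenever $E\subset F$, and then averaging this inequality with respect to a Fubini-type disintegration of the Haar measure on $G_{n,k-1}$ as an integral over the flag manifold of pairs $E\subset F$ with $\dim E=k-1$ and $\dim F=k$.

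First I would observe the following elementary geometric fact: if $E\subset F$ are linear subspaces of $\R^n$, then for every $x\in\R^n$ one has $P_Ex=P_E(P_Fx)$, and since the orthogonal projection onto $E$ is a contraction, $|P_Ex|\leq|P_Fx|$. Consequently,
\[
R(P_EK)=\max_{x\in K}|P_Ex|\leq \max_{x\in K}|P_Fx|=R(P_FK).
\]
Thus the outer radius of a projection is monotone with respect to containment of the projecting subspaces.

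Next I would integrate this inequality using the standard disintegration of the Haar measure on Grassmannians. Fix $F\in G_{n,k}$ and let $\nu_{F,k-1}$ be the Haar probability measure on $G_{F,k-1}$. Since $R(P_EK)\leq R(P_FK)$ for every $E\in G_{F,k-1}$, averaging over $E$ yields
\[
\int_{G_{F,k-1}} R(P_EK)\, d\nu_{F,k-1}(E)\leq R(P_FK).
\]
Now I would integrate over $F\in G_{n,k}$ and invoke the invariance of the Haar measure on $G_{n,k-1}$: the measure obtained by first picking $F\in G_{n,k}$ according to $\nu_{n,k}$ and then $E\in G_{F,k-1}$ according to $\nu_{F,k-1}$ is invariant under the orthogonal group, and hence coincides with $\nu_{n,k-1}$. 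Therefore
\[
\widetilde{R}_{k-1}(K)=\int_{G_{n,k}}\int_{G_{F,k-1}}R(P_EK)\,d\nu_{F,k-1}(E)\,d\nu_{n,k}(F)\leq\int_{G_{n,k}}R(P_FK)\,d\nu_{n,k}(F)=\widetilde{R}_k(K),
\]
which is the desired inequality.

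There is no real obstacle here; the only point requiring a small amount of care is the disintegration identity for the Haar measure on the Grassmannian, which is a well-known consequence of uniqueness of the invariant probability measure on the flag manifold of pairs $E\subset F$.
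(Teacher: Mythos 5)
Your proof is correct and follows essentially the same route as the paper: the pointwise inequality $R(P_EK)\leq R(P_FK)$ for $E\subset F$, averaged over $E\in G_{F,k-1}$ and then over $F\in G_{n,k}$, with the uniqueness of the Haar probability measure on $G_{n,k-1}$ identifying the iterated integral with $\widetilde{R}_{k-1}(K)$. The only difference is that you spell out the contraction argument for the pointwise step, which the paper takes for granted.
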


\begin{proof}
Let $F\in G_{n,k}$, with $k\geq 2$. Then, for any ($k-1$)-dimensional
subspace $E$ of $F$, we have that $R(P_EK)\leq R(P_FK)$. Thus
\[
\int_{G_{F,k-1}}R(P_EK)\,d\nu_{F,k-1}(E)\leq R(P_FK),
\]
and hence
\[
\int_{G_{n,k}}\int_{G_{F,k-1}}R(P_EK)\,d\nu_{F,k-1}(E)\,d\nu_{n,k}(F)\leq
\int_{G_{n,k}}R(P_FK)\,d\nu_{n,k}(F)=\widetilde{R}_k(K).
\]
By the uniqueness of the Haar probability measure on $G_{n,k-1}$, the
integral on the left hand side of the previous inequality is
\[
\int_{G_{n,k-1}}R(P_EK)\,d\nu_{n,k-1}(E)=\widetilde{R}_{k-1}(K).\qedhere
\]
\end{proof}

The following lemmas will be needed to estimate from above the mean outer
radii of $K_N$ with high probability, for which we will follow the
arguments from \cite{DGT1} and \cite{DGT2}, used there to estimate the
normalized quermassintegrals of $K_N$.

\begin{lemma}\label{lemma3}
Let $K\subset\R^n$ be an isotropic convex body and $K_N$ be a random
polytope generated by $N\geq n$ independent random vectors $X_1,\dots,X_N$ uniformly distributed in $K$.
Then, for all $1\leq k \leq n$, $t>1$
and $q \geq \log N$,
\[
\widetilde{R}_k(K_N)\leq
ct\left(\int_{G_{n,k}}I_q(K,F)^q\,d\nu_{n,k}(F)\right)^{1/q}
\]
with probability greater than $1-t^{-q}$, where $c>0$ is an absolute
constant.
\end{lemma}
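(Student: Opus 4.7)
The plan is to apply the moment method (a standard $L^q$-norm trick combined with Markov's inequality), with the parameter $q$ chosen large enough to turn the factor $N^{1/q}$ coming from the union-bound-style step into an absolute constant.

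First I would rewrite
\[
\widetilde{R}_k(K_N)=\int_{G_{n,k}}\max_{1\leq j\leq N}|P_FX_j|\,d\nu_{n,k}(F),
\]
using the identity $R(P_FK_N)=\max_j|P_FX_j|$ already recorded in the introduction. Then, since $x\mapsto x^q$ is convex for $q\geq 1$, Jensen's inequality gives
\[
\widetilde{R}_k(K_N)^q\leq\int_{G_{n,k}}\max_{1\leq j\leq N}|P_FX_j|^q\,d\nu_{n,k}(F).
\]
Next, the trivial $\ell^\infty\leq\ell^q$ bound $\max_j a_j^q\leq\sum_j a_j^q$ allows us to replace the maximum by a sum, yielding
\[
\widetilde{R}_k(K_N)^q\leq\sum_{j=1}^N\int_{G_{n,k}}|P_FX_j|^q\,d\nu_{n,k}(F).
\]

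Second, I would take expectation with respect to the random points $X_1,\dots,X_N$. Since the $X_j$ are uniform in the isotropic body $K$ (so $|K|=1$) and independent, Fubini's theorem gives
\[
\E\bigl(|P_FX_j|^q\bigr)=\int_K|P_Fx|^q\,dx=I_q(K,F)^q
\]
for every fixed $F\in G_{n,k}$, so
\[
\E\,\widetilde{R}_k(K_N)^q\leq N\int_{G_{n,k}}I_q(K,F)^q\,d\nu_{n,k}(F).
\]

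Finally, an application of Markov's inequality to the non-negative random variable $\widetilde{R}_k(K_N)^q$ shows that, with probability at least $1-t^{-q}$,
\[
\widetilde{R}_k(K_N)\leq t\,N^{1/q}\left(\int_{G_{n,k}}I_q(K,F)^q\,d\nu_{n,k}(F)\right)^{1/q}.
\]
The hypothesis $q\geq\log N$ then gives $N^{1/q}\leq e$, absorbing the $N^{1/q}$ factor into the absolute constant $c$ and yielding exactly the stated bound. There is no real obstacle here: the argument is the standard moment/Markov method; the only thing to keep in mind is that $q$ must be chosen at least $\log N$ precisely so that $N^{1/q}$ does not spoil the estimate, which explains the appearance of this hypothesis in the statement.
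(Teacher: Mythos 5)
Your argument is correct, but it takes a genuinely different (and more elementary) route than the paper. You bound the $q$-th moment of $\widetilde{R}_k(K_N)$ directly: Jensen's inequality on the probability measure $\nu_{n,k}$, the crude bound $\max_{j}|P_FX_j|^q\le\sum_{j}|P_FX_j|^q$, and Fubini give $\E\,\widetilde{R}_k(K_N)^q\le N\int_{G_{n,k}}I_q(K,F)^q\,d\nu_{n,k}(F)$, after which a single application of Markov's inequality together with $q\ge\log N$ (so $N^{1/q}\le e$) finishes the proof. The paper instead first applies Cauchy--Schwarz on $G_{n,k}$ to split off the factor $\bigl(\int_{G_{n,k}}I_q(K,F)^q\,d\nu_{n,k}\bigr)^{1/q}$, and then controls the normalized integral $\int_{G_{n,k}}R(P_FK_N)^q/I_q(K,F)^q\,d\nu_{n,k}$ by a layer-cake computation: a union bound plus Markov gives $\Pro\bigl(R(P_FK_N)\ge tI_q(K,F)\bigr)\le Nt^{-q}$, the a priori estimate $R(P_FK_N)\le R(K)\le(n+1)L_K\le\frac{n+1}{\sqrt k}I_q(K,F)$ (which uses isotropicity) truncates the integral, and the choices $A=e$, $q\ge\log N$ bound the expectation by $c^q$ before a final Markov step. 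Your route avoids the truncation and the bound $R(K)\le(n+1)L_K$ entirely, so it works for any convex body of volume one and makes the role of the hypothesis $q\ge\log N$ (absorbing $N^{1/q}$) transparent; the paper's scheme has the advantage of being the template it reuses for the lower bound via negative moments (Lemma \ref{lemma.low2}), where replacing a maximum by a sum is not available. One cosmetic point: Markov gives probability at least, rather than strictly greater than, $1-t^{-q}$; as in the paper's own proof, enlarging the absolute constant $c$ (e.g.\ using the threshold $(2et)^q$) makes the inequality strict.
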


\begin{proof}
Applying H\"{o}lder's  inequality and Cauchy-Schwartz's inequality we get
\[
\begin{split}
\widetilde{R}_k(K_N) & =\int_{G_{n,k}}R(P_FK_N)\,d\nu_{n,k}(F)
    \leq\left(\int_{G_{n,k}}R(P_FK_N)^{q/2}\,d\nu_{n,k}(F)\right)^{2/q}\\
 & \leq\left(\int_{G_{n,k}}I_q(K,F)^q\,d\nu_{n,k}(F)\right)^{1/q}
  \left(\int_{G_{n,k}}\frac{R(P_FK_N)^q}{I_q(K,F)^q}\,d\nu_{n,k}(F)\right)^{1/q}.
\end{split}
\]
So, in order to conclude the proof, it suffices to show that
\begin{equation}\label{eq2}
\left(\int_{G_{n,k}}\frac{R(P_FK_N)^q}{I_q(K,F)^q}
\,d\nu_{n,k}(F)\right)^{1/q}\leq ct
\end{equation}
with probability greater than $1-t^{-q}$, where $c>0$ is an absolute
constant.

In order to prove \eqref{eq2} we notice that, on the one hand, since $K$
is isotropic then for every $F\in G_{n,k}$ it holds $R(K)\leq (n+1)L_K$
(see, for instance, \cite{KLS}), and together with the definition of isotropic constant and
H\"older's inequality we get
\[
R(P_FK_N)\leq R(K)\leq (n+1)L_K\leq\frac{n+1}{\sqrt{k}}I_q(K,F).
\]
Therefore,
\begin{equation}\label{eq1}
\begin{split}
\E\int_{G_{n,k}} & \frac{R(P_FK_N)^q}{I_q(K,F)^q}\,d\nu_{n,k}(F)\\
  & \qquad=\E\int_0^{+\infty}qt^{q-1}\nu_{n,k}\bigl\{F\in G_{n,k}:R(P_FK_N)\geq t\,I_q(K,F)\bigr\}\,dt\\
  & \qquad=\int_0^{\frac{n+1}{\sqrt k}}qt^{q-1}\E\nu_{n,k}\bigl\{F\in G_{n,k}:R(P_FK_N)\geq t\,I_q(K,F)\bigr\}\,dt.
\end{split}
\end{equation}
On the other hand, since $R(P_FK_N)=\max_{1\leq j \leq N}|P_FX_j|$ for
every $F\in G_{n,k}$, using Markov's inequality we get that for any $1\leq
j\leq N$,
\[
\Pro\bigl(|P_FX_j|\geq tI_q(K,F)\bigr)\leq t^{-q},
\]
and thus the union bound gives
\begin{equation*}
\Pro\bigl(R(P_FK_N)\geq tI_q(K,F)\bigr)\leq N t^{-q}.
\end{equation*}
By a standard application of Fubini's theorem we get that
\[
\begin{split}
\E\nu_{n,k}\bigl\{F\in G_{n,k}: & \,R(P_FK_N)\geq tI_q(K,F)\bigr\}\\
 & =\int_{G_{n,k}}\Pro\bigl(R(P_FK_N)\geq tI_q(K,F)\bigr)\,d\nu_{n,k}(F)\leq Nt^{-q}.
\end{split}
\]
This, together with \eqref{eq1} show that for any
$A\in\bigl(0,(n+1)/\sqrt{k}\bigr)$ it holds
\[
\begin{split}
\E\int_{G_{n,k}}\frac{R(P_FK_N)^q}{I_q(K,F)^q}\,d\nu_{n,k}(F) &
  \leq\int_0^Aqt^{q-1}\,dt+\int_A^{\frac{n+1}{\sqrt k}}qt^{q-1}Nt^{-q}\,dt\\
 & =A^q + q N \log\left(\frac{n+1}{A\sqrt k}\right).
\end{split}
\]
Taking $A=e$ and $q\geq \log N$ we obtain that this is bounded from above
by $c^q$ with $c$ an absolute constant. Inequality \eqref{eq2} now follows
from Markov's inequality. It completes the proof of the lemma.
\end{proof}

\begin{lemma}\label{lemma4}
Let $K\subset\R^n$ be a convex body, $1\leq k\leq n$ and $q\geq 1$. Then
\[
\left(\int_{G_{n,k}}I_q(K,F)^q\,d\nu_{n,k}(F)\right)^{1/q}\simeq\sqrt{\frac{k+q}{n+q}}\,I_q(K).
\]
\end{lemma}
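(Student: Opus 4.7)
The plan is to apply Fubini's theorem to reduce the Grassmannian average to a pointwise computation, and then estimate a ratio of Gamma functions.

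First I would write
\[
\int_{G_{n,k}} I_q(K,F)^q\,d\nu_{n,k}(F) = \int_{G_{n,k}} \int_K |P_F x|^q\,dx\,d\nu_{n,k}(F) = \int_K \int_{G_{n,k}} |P_F x|^q\,d\nu_{n,k}(F)\,dx,
\]
so everything rests on computing $\int_{G_{n,k}} |P_F x|^q\,d\nu_{n,k}(F)$ for a fixed $x \in \R^n$. By the rotation invariance of $\nu_{n,k}$, this quantity equals $|x|^q \cdot c_{n,k,q}$ where $c_{n,k,q} = \int_{G_{n,k}} |P_F \theta_0|^q\,d\nu_{n,k}(F)$ for any fixed $\theta_0 \in S^{n-1}$. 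Equivalently, by the standard duality between the sphere and the Grassmannian, $c_{n,k,q} = \int_{S^{n-1}} |P_{F_0}\theta|^q\,d\sigma(\theta)$ for any fixed $F_0 \in G_{n,k}$.

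Next I would evaluate $c_{n,k,q}$ explicitly. For $\theta$ uniform on $S^{n-1}$ and $F_0$ fixed of dimension $k$, the squared projection $|P_{F_0}\theta|^2$ has a Beta$(k/2,(n-k)/2)$ distribution, and a direct Beta-integral computation yields
\[
c_{n,k,q} = \frac{\Gamma\!\left(\tfrac{k+q}{2}\right)\Gamma\!\left(\tfrac{n}{2}\right)}{\Gamma\!\left(\tfrac{k}{2}\right)\Gamma\!\left(\tfrac{n+q}{2}\right)}.
\]
Substituting back, I obtain the exact identity $\int_{G_{n,k}} I_q(K,F)^q\,d\nu_{n,k}(F) = c_{n,k,q}\, I_q(K)^q$, and taking $q$-th roots the statement reduces to the purely arithmetic claim
\[
c_{n,k,q}^{1/q} \simeq \sqrt{\frac{k+q}{n+q}}
\]
uniformly in $1\leq k\leq n$ and $q\geq 1$.

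The main obstacle is this last estimate, since the ratio of four Gammas does not obviously have the clean form on the right, and one must track constants uniformly across the two regimes $q\lesssim k$ and $q\gtrsim k$ (and likewise for $n$). The plan is to take logarithms and apply Stirling's formula $\log\Gamma(x)=(x-\tfrac12)\log x - x + O(1)$. After cancelling the linear terms, $\tfrac{1}{q}\log c_{n,k,q}$ reduces to $\tfrac{1}{2}\log\frac{k+q}{n+q}$ plus residual pieces of the form $\tfrac{k-1}{q}\log(1+q/k)$ and $\tfrac{n-1}{q}\log(1+q/n)$; using $t\log(1+s/t)\leq s$ for $s,t>0$, each residual is $O(1)$ uniformly, which gives $c_{n,k,q}^{1/q}\simeq \sqrt{(k+q)/(n+q)}$ as required. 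Combining this estimate with the Fubini identity and taking $q$-th roots completes the proof.
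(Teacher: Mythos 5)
Your argument is correct, and it reaches the estimate by a genuinely different route than the paper in the key step. Both proofs begin identically: Fubini plus rotation invariance reduce everything to the average of $|P_Fx|^q$ over $G_{n,k}$ for a fixed $x$. The paper then avoids any explicit computation by invoking the known equivalence from \cite{P0}, $|x|\simeq\sqrt{(n+q)/q}\,\bigl(\int_{S^{n-1}}|\langle x,\theta\rangle|^q\,d\sigma(\theta)\bigr)^{1/q}$, applied twice -- once inside each $k$-dimensional subspace $F$ and once in $\R^n$ -- together with the uniqueness of the Haar measure on $S^{n-1}$; the factors $\sqrt{(k+q)/q}$ and $\sqrt{q/(n+q)}$ then multiply to give $\sqrt{(k+q)/(n+q)}$. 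You instead evaluate the Grassmannian average exactly, using that $|P_{F_0}\theta|^2$ is Beta$(k/2,(n-k)/2)$ for $\theta$ uniform on $S^{n-1}$, which gives the clean identity $\int_{G_{n,k}}I_q(K,F)^q\,d\nu_{n,k}(F)=c_{n,k,q}\,I_q(K)^q$ with $c_{n,k,q}$ a ratio of Gamma functions, and you then verify $c_{n,k,q}^{1/q}\simeq\sqrt{(k+q)/(n+q)}$ by Stirling; your bookkeeping is sound -- the residual terms are indeed of the form $\tfrac{k-1}{2q}\log(1+q/k)$ and $\tfrac{n-1}{2q}\log(1+q/n)$, each bounded by $\tfrac12$, and the Stirling error contributes $O(1/q)=O(1)$ since $q\geq1$ (note $\Gamma$ is evaluated at arguments $\geq 1/2$, so the uniform form of Stirling you use is legitimate; the degenerate case $k=n$ is trivial since then $c_{n,n,q}=1$). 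What each approach buys: yours is self-contained, produces an exact identity before any estimation, and makes the dependence on $n,k,q$ completely explicit, at the price of the Stirling analysis; the paper's is shorter and stays within the toolkit it already uses elsewhere, delegating the Gamma-function asymptotics to the cited one-dimensional moment formula.
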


\begin{proof}
In \cite{P0} it was shown that for any $x\in\R^n$ and all $q\geq 1$ it
holds
\begin{equation*}\label{eq.2-norm}
|x|\simeq\sqrt{\frac{n+q}{q}}\left(\int_{S^{n-1}}\bigl|\inner{x}{\theta}\bigr|^q\,d\sigma(\theta)\right)^{1/q}.
\end{equation*}
Applying Fubini's theorem, the above formula and the uniqueness of the Haar measure on $S^{n-1}$ we obtain
\[
\begin{split}
\Biggl(\int_{G_{n,k}}I_q(K, & F)^q\,d\nu_{n,k}(F)\Biggr)^{1/q}
    =\left(\int_K \int_{G_{n,k}} |P_Fx|^q\,d\nu_{n,k}(F)\,dx\right)^{1/q}\\
 & \simeq\sqrt{\frac{k+q}{q}}\left(\int_K \int_{G_{n,k}}\int_{S_F^{k-1}}\bigl|\inner{P_Fx}{\theta}\bigr|^q
    \,d\sigma_F(\theta)\,d\nu_{n,k}(F)\,dx\right)^{1/q}\\
 & =\sqrt{\frac{k+q}{q}}\left(\int_K \int_{G_{n,k}}\int_{S_F^{k-1}}\bigl|\inner{x}{\theta}\bigr|^q
    \,d\sigma_F(\theta)\,d\nu_{n,k}(F)\,dx\right)^{1/q}\\
 & =\sqrt{\frac{k+q}{q}}\left(\int_K\int_{S^{n-1}}\bigl|\inner{x}{\theta}\bigr|^q
    \,d\sigma_{n-1}(\theta)\,dx\right)^{1/q}\\
 & \simeq\sqrt{\frac{k+q}{n+q}}\left(\int_K|x|^q\,dx \right)^{1/q}
  =\sqrt{\frac{k+q}{n+q}}\;I_q(K).\qedhere
\end{split}
\]
\end{proof}

The following lemmas will be used for estimating the lower bound in
Theorem~\ref{THM estimate high brob}. The method will be similar to the
one for the upper bound, but involving the negative moments of the
Euclidean norm.

\begin{lemma}\label{lemma.low2}
Let $K\subset\R^n$ be a convex body and $K_N$ be a random polytope
generated by $N\geq n$ independent random vectors $X_1,\dots,X_N$
uniformly distributed in $K$. Then, for any $1\leq k\leq n$, $q<k$ and
$t\in(0,1)$,
\begin{equation*}
\left(\int_{G_{n,k}}\frac{R(P_FK_N)^{-q}}{I_{-q}(K,F)^{-q}}\,d\nu_{n,k}(F)\right)^{-1/q}
\geq t\,\left(\frac{N-1}{N}\right)^{1/q}
\end{equation*}
with probability greater than $1-t^q$.
\end{lemma}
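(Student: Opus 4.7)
The plan is to apply Markov's inequality to the nonnegative random variable
\[
Y:=\int_{G_{n,k}}\frac{R(P_FK_N)^{-q}}{I_{-q}(K,F)^{-q}}\,d\nu_{n,k}(F),
\]
so that the conclusion $Y^{-1/q}\ge t\bigl((N-1)/N\bigr)^{1/q}$ can be rephrased as $Y\le \bigl(N/(N-1)\bigr)\,t^{-q}$. By Markov, obtaining this bound with probability at least $1-t^q$ reduces to proving $\E Y\le N/(N-1)$, and, by Fubini's theorem, this in turn reduces to the pointwise estimate
\[
\E\bigl[R(P_FK_N)^{-q}\bigr]\le\frac{N}{N-1}\,I_{-q}(K,F)^{-q}\qquad\text{for every fixed }F\in G_{n,k}.
\]

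To establish this pointwise bound I would combine the identity $R(P_FK_N)=\max_{1\le j\le N}|P_FX_j|$ with the independence of the $X_j$. Applying the layer-cake formula to $R(P_FK_N)^{-q}$ together with $\Pro(R(P_FK_N)\le v)=\Pro(|P_FX_1|\le v)^N$ gives
\[
\E\bigl[R(P_FK_N)^{-q}\bigr]=q\int_0^{\infty} v^{-q-1}\,\Pro\bigl(|P_FX_1|\le v\bigr)^N\,dv.
\]
For the inner probability, Markov's inequality applied to the variable $|P_FX_1|^{-q}$, whose expectation equals $I_{-q}(K,F)^{-q}$, gives $\Pro(|P_FX_1|\le v)\le v^q\,I_{-q}(K,F)^{-q}$, supplemented by the trivial bound $1$ once $v\ge I_{-q}(K,F)$. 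Splitting the $v$-integral at $I_{-q}(K,F)$ and integrating each piece directly produces the advertised constant: the interval $[0,I_{-q}(K,F)]$ contributes $1/\bigl((N-1)I_{-q}(K,F)^q\bigr)$ and the tail contributes $1/I_{-q}(K,F)^q$, summing to exactly $N/\bigl((N-1)I_{-q}(K,F)^q\bigr)$.

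The main (and fairly mild) obstacle is precisely this two-piece integral calculation producing the sharp constant $N/(N-1)$; a cruder argument based only on $\min_j|P_FX_j|^{-q}\le|P_FX_1|^{-q}$ would give $\E Y\le 1$, which already implies the statement with an absolute constant in place of the factor $(N-1)/N$. The hypothesis $q<k$ enters the argument only to guarantee that $I_{-q}(K,F)$ is finite on a $k$-dimensional projection (integrability of $|P_Fx|^{-q}$ near the origin), while $N\ge 2$ is what makes the exponent $qN-q-1$ integrable at $0$ on the interval $[0,I_{-q}(K,F)]$.
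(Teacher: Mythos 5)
Your proposal is correct and is essentially the paper's own argument: both rest on the per-point Markov bound $\Pro\bigl(|P_FX_1|\le v\bigr)\le v^q\,I_{-q}(K,F)^{-q}$, independence raising it to the $N$-th power, a layer-cake integration split at the threshold giving exactly $1+\tfrac{1}{N-1}=\tfrac{N}{N-1}$, Fubini, and a final application of Markov's inequality. The only (cosmetic) difference is the order of operations: you apply Fubini first and bound $\E\bigl[R(P_FK_N)^{-q}\bigr]$ pointwise in $F$, whereas the paper keeps the Grassmannian integral inside and bounds $\E\,\nu_{n,k}\bigl\{F:R(P_FK_N)\le\varepsilon I_{-q}(K,F)\bigr\}$ before performing the same split integral.
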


\begin{proof}
Since $R(P_FK_N)=\max_{1\leq j \leq N}|P_FX_j|$ for every $F\in G_{n,k}$,
using Markov's inequality we get that for any $1\leq j\leq N$ and all
$q<k$,
\[
\Pro\bigl(|P_FX_j|\leq\varepsilon I_{-q}(K,F)\bigr)=
\Pro\bigl(|P_FX_j|^{-q}\geq\varepsilon^{-q}\,I_{-q}(K,F)^{-q}\bigr)\leq\varepsilon^q,
\]
for every $\varepsilon\in(0,1)$, and thus
\begin{equation*}
\Pro\bigl(R(P_FK_N)\leq\varepsilon I_{-q}(K,F)\bigr)\leq\varepsilon^{Nq}.
\end{equation*}
Then, a standard application of Fubini's theorem leads to
\begin{equation*}\label{eq.strong.sbp}
\E\nu_{n,k}\bigl\{F\in G_{n,k}:R(P_FK_N)\leq\varepsilon I_{-q}(K,F)\bigr\}
\leq\varepsilon^{Nq},\quad \varepsilon\in(0,1),
\end{equation*}
which can be used to bound the expectation from above:
\[
\begin{split}
\E\int_{G_{n,k}} & \frac{R(P_F K_N)^{-q}}{I_{-q}(K,F)^{-q}}\,d\nu_{n,k}(F)\\
 & \qquad=\E\int_0^{+\infty}qt^{q-1}\nu_{n,k}\bigl\{F\in G_{n,k}:I_{-q}(K,F)\geq tR(P_FK_N)\bigr\}\,dt\\
 & \qquad=\int_0^{+\infty}qt^{q-1}\E\nu_{n,k}\left\{F\in G_{n,k}:R(P_FK_N)\leq\frac{1}{t}I_{-q}(K,F)\right\}\,dt\\
 & \qquad\leq\int_0^1 qt^{q-1}\,dt+\int_1^{+\infty}qt^{q-1-Nq}\,dt=1+\frac1{N-1}=\frac{N}{N-1}.
\end{split}
\]
To finish the proof we apply Markov's inequality.
\end{proof}

In the next lemma, Proposition \ref{prop.paouris3} plays a crucial role.

\begin{lemma}\label{lemma.low3}
Let $K\subset\R^n$ be a convex body with $|K|=1$ and centroid at the
origin. For every $1\leq k \leq n$ and every $q<k$ it holds
\[
\left(\int_{G_{n,k}}I_{-q}(K,F)^{-q}\,d\nu_{n,k}(F)\right)^{-1/q}
\simeq\sqrt{\frac{k}{n}}\,I_{-q}(K).
\]
\end{lemma}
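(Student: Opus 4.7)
The plan is to reduce both sides of the asserted equivalence to the same quantity, namely the negative mean width $w_{-q}\bigl(Z_q(K)\bigr)$ of the $L_q$-centroid body of $K$, and then compare. The key tool is Proposition \ref{prop.paouris3}, which is the negative-moment analogue of the formula driving Lemma \ref{lemma4}.

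First I would apply Proposition \ref{prop.paouris3} to each $F \in G_{n,k}$, giving
\[
I_{-q}(K,F)^{-q} \simeq (q/k)^{q/2}\, w_{-q}\bigl(P_F Z_q(K)\bigr)^{-q} = (q/k)^{q/2} \int_{S^{k-1}_F} h_{P_F Z_q(K)}(\theta)^{-q}\, d\sigma_F(\theta).
\]
Now I would exploit the elementary identity $h_{P_F Z_q(K)}(\theta) = h_{Z_q(K)}(\theta)$ valid for every $\theta \in F$ (since $\langle P_F x,\theta\rangle = \langle x,\theta\rangle$ when $\theta\in F$ and $P_F$ is self-adjoint), then integrate over $G_{n,k}$ and invoke the Haar-uniqueness trick used in the proof of Lemma \ref{lemma4}: averaging the uniform measure on $S^{k-1}_F$ over $F\in G_{n,k}$ reproduces the uniform measure on $S^{n-1}$. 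This collapses the double integral to
\[
\int_{S^{n-1}} h_{Z_q(K)}(\theta)^{-q}\,d\sigma(\theta) = w_{-q}\bigl(Z_q(K)\bigr)^{-q},
\]
so that
\[
\left(\int_{G_{n,k}} I_{-q}(K,F)^{-q}\,d\nu_{n,k}(F)\right)^{-1/q} \simeq \sqrt{k/q}\; w_{-q}\bigl(Z_q(K)\bigr).
\]

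Finally I would apply Proposition \ref{prop.paouris3} once more, this time in the full-space setting $F = \R^n$, $k = n$ (permissible since $q<k\leq n$), to obtain $I_{-q}(K) \simeq \sqrt{n/q}\; w_{-q}\bigl(Z_q(K)\bigr)$. Dividing, the $w_{-q}\bigl(Z_q(K)\bigr)$ factors cancel and the ratio of the two sides of the lemma equals $\sqrt{k/n}$, as required. There is no real obstacle once one recognizes the parallel with Lemma \ref{lemma4}; the only points worth verifying carefully are the support-function identity on $F$ and the Haar-uniqueness step, neither of which is delicate.
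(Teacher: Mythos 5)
Your proposal is correct and follows essentially the same route as the paper's own proof: apply Proposition \ref{prop.paouris3} on each $F$, use $h_{P_FZ_q(K)}(\theta)=h_{Z_q(K)}(\theta)$ for $\theta\in F$ together with Haar-uniqueness to collapse the double integral to $w_{-q}\bigl(Z_q(K)\bigr)$, and then apply the proposition again with $F=\R^n$ to convert back to $I_{-q}(K)$. No substantive differences.
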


\begin{proof}
Using Proposition \ref{prop.paouris3} and the uniqueness of the Haar
probability measure on the sphere $S^{n-1}$, we get
\[
\begin{split}
\Biggl(\int_{G_{n,k}}\! I_{-q}(K,F)^{-q} & d\nu_{n,k}(F)\Biggr)^{-1/q}
    \!\!\simeq\sqrt{\frac{k}{q}}\left(\int_{G_{n,k}}\!w_{-q}\bigl(P_FZ_q(K)\bigr)^{-q}\,d\nu_{n,k}(F)\right)^{-1/q}\\
 & =\sqrt{\frac{k}{q}}\left(\int_{G_{n,k}}\int_{S_F^{k-1}}h_{P_FZ_q(K)}(\theta)^{-q}
    \,d\sigma_F(\theta)\,d\nu_{n,k}(F)\right)^{-1/q}\\
 & =\sqrt{\frac{k}{q}}\left(\int_{G_{n,k}}\int_{S_F^{k-1}}h_{Z_q(K)}(\theta)^{-q}
    \,d\sigma_F(\theta)\,d\nu_{n,k}(F)\right)^{-1/q}\\
 & =\sqrt{\frac{k}{q}}\left(\int_{S^{n-1}}h_{Z_q(K)}(\theta)^{-q}\,d\sigma(\theta)\right)^{-1/q}
    =\sqrt{\frac{k}{q}}\,w_{-q}\bigl(Z_q(K)\bigr).
\end{split}
\]
Applying again Proposition \ref{prop.paouris3} we get the result.
\end{proof}

\section{Asymptotic Formula for the Mean Outer Radii of Random Polytopes} \label{SEC main}

Now we are ready to prove Theorem \ref{THM estimate high brob}.

\begin{proof}[Proof of Theorem \ref{THM estimate high brob}]
First we state the upper bound for the mean outer radii of $K_N$ in the
full range $n\leq N\leq e^{\sqrt n}$. Applying Lemma \ref{lemma3} and
Lemma \ref{lemma4} with $q=\log N$ and $t=e^s$ for any $s>0$, we get that
\[
\widetilde{R}_k(K_N)\leq c(s)\sqrt{\frac{k+\log N}{n+\log N}}\;I_{\log
N}(K)
\]
with probability greater than $1-N^{-s}$, where $c(s)>0$ is an absolute
constant depending only on $s$. Using (\ref{eq.Paouris1}) for $q=\log N$,
and since $N\leq e^{\sqrt{n}}$, we obtain that $I_{\log N}(K) \leq c
\sqrt{n}\,L_K$. Consequently, we get
\[
\widetilde{R}_k(K_N)\leq c(s)\sqrt{k+\log N}\,L_K\simeq
\max\left\{\sqrt{k},\sqrt{\log N}\right\}\,L_K.
\]

Next we show the lower bound in Theorem \ref{THM estimate high brob}. We start assuming that $\log
N<k$, $1\leq k \leq n$. By H\"older's inequality,
\[
\begin{split}
\widetilde{R}_k(K_N) & =\int_{G_{n,k}}R(P_FK_N)\,d\nu_{n,k}(F)
    \geq\left(\int_{G_{n,k}}R(P_FK_N)^{-q/2}\,d\nu_{n,k}(F)\right)^{-2/q}\\
 & \geq\left(\int_{G_{n,k}}I_{-q}(K,F)^{-q}\,d\nu_{n,k}(F)\right)^{-1/q}\!\!
  \left(\int_{G_{n,k}}\frac{R(P_FK_N)^{-q}}{I_{-q}(K,F)^{-q}}\,d\nu_{n,k}(F)\right)^{-1/q}\!\!,
\end{split}
\]
and then Lemmas \ref{lemma.low2} and \ref{lemma.low3} for $q=\log N<k$ and
$t=e^{-s}$ imply that
\begin{equation}\label{lowerboundsqrtk}
\widetilde{R}_k(K_N)\geq c(s)\sqrt{\frac{k}{n}}\,I_{-\log N}(K)
\end{equation}
with probability greater than $1-N^{-s}$, where $c(s)$ is an absolute
constant depending only on $s$. Since $\log N\leq\sqrt{n}$, using
(\ref{eq.Paouris2}) we get that $I_{-\log N}(K)\simeq\sqrt{n}\,L_K$ and
the result follows.

In \cite{DGT2} it was shown that $\bigl(|K_N|/|B_2^n|\bigr)^{1/n}\geq
c(s)\sqrt{\log(N/n)}\,L_K$ with probability greater than $1-N^{-s}$. The
monotonicity of the mean outer radii (Lemma \ref{lemma.increasing}) and
the fact that $\bigl(|K_N|/|B_2^n|\bigr)^{1/n}\leq \widetilde{R}_1(K_N)$
complete the proof of this case.
\end{proof}

\begin{rmk}\label{rem_extending}
In order to obtain the upper bound for the mean outer radii in
Theo\-rem~\ref{THM estimate high brob} we cannot expect to extend the
range of $N$ to $N=e^{\beta n}$ for some $\beta\in(0,1)$. In fact, if this
holds for, say, $\beta=1/2$, then in the case $k=1$ we get that for every
isotropic convex body $K\subset\R^n$, we have
\[
\Pro\bigl(\omega(K_N)\leq \widetilde{R}_1(K_N)\leq\sqrt{n}L_K\bigr)\geq
1-e^{-n/2}.
\]
Since it is known (see \cite{GM}) that for $N\simeq e^{n/2}$ it holds
\[
\Pro\bigl(K_N\subseteq K\subseteq c(\delta)K_N\bigr)\geq 1-\delta,
\]
then choosing the parameters ($\delta > 1/2$, $n\geq 2$) we get that the
event
\[
\bigl\{\omega(K_N)\leq \sqrt n \,L_K \; \text{ and }\;K\subseteq
cK_N\bigr\}
\]
occurs with positive probability. Thus, for every isotropic convex body
$K\subset\R^n$, $\omega(K)\leq c\,\omega(K_N)\leq C\,\sqrt n\,L_K$, which
is not true, as the cross-polytope shows.

Regarding an extension of the range of $N$ from below, we note that
similar arguments work also for $n^{1+\delta}<N<e^{\sqrt{n}}$, for any
$\delta>0$, just replacing the constants $c_1(\delta)$ and $c_2(\delta)$.
However, if the number of vertices $N$ is proportional to the dimension
$n$, {\it i.e.}, $N \simeq n$, more refined arguments are needed.
\end{rmk}

Now we show that if the expectation is involved, the range for $N$ can be
extended from below till $N\geq n$. In the proof we
will need the following result which was shown in \cite{AP2}:
in our notation, if $K\subset\R^n$ is an isotropic convex body and
$X_1,\dots,X_N$ are independent random vectors uniformly distributed in
$K$, then for every $n\leq N\leq e^{\sqrt{n}}$ we have
\begin{equation}\label{David-Joscha}
\E\widetilde{R}_1(K_N)\simeq \sqrt{\log N}\,L_K.
\end{equation}

\begin{rmk}
Since in \cite{AP2} the authors were interested in studying the value of
the mean width of $K_N$, which is the same as $\widetilde{R}_1(K_N)$ only
when $K_N$ is symmetric, equation (\ref{David-Joscha}) is specifically
written only in the symmetric case. However, the same proof leads to this
equation in the non-symmetric case.
\end{rmk}

\begin{proof}[Proof of Theorem \ref{THM estimate expectation}]
First we use the fact that the upper bound in Theorem \ref{THM estimate
high brob} is valid for the whole range $n\leq N\leq e^{\sqrt n}$. Thus,
for every $1\leq k\leq n$ and all $s>0$,
\[
\E\widetilde{R}_k(K_N)\leq c(s)\max\left\{\sqrt{k},\sqrt{\log
N}\right\}L_K +N^{-s}R(K).
\]
Since $R(K)\leq (n+1)L_K$ and
taking $s=1$, we obtain, on the one hand, that
\begin{equation}\label{eq.up expect}
\E\widetilde{R}_k(K_N)\leq c\max\left\{\sqrt{k},\sqrt{\log N}\right\}L_K.
\end{equation}

On the other hand, by Markov's inequality, we have that for every $\alpha>0$
$$
\E\widetilde{R}_k(K_N)\geq\alpha\Pro(\widetilde{R}_k(K_N)\geq\alpha).
$$
From equation (\ref{lowerboundsqrtk}) we have
\begin{equation}\label{eq.low expect k}
\E\widetilde{R}_k(K_N)\geq c(s)\sqrt{k}L_K\left(1-N^{-s}\right)\geq c\sqrt L_K
\end{equation}
choosing $s$ to be some absolute constant.
Finally, using (\ref{David-Joscha}) and the
monotonicity of the mean outer radii (Lemma \ref{lemma.increasing}), we
have that, for all $1\leq k\leq n$ and $n<N<e^{\sqrt{n}}$,
\begin{equation}\label{eq.low expect logN}
\E\widetilde{R}_k(K_N)\geq
\E\widetilde{R}_1(K_N)\simeq\sqrt{\log N}\,L_K.
\end{equation}
The theorem follows from (\ref{eq.up expect}), (\ref{eq.low expect k}) and
(\ref{eq.low expect logN}).
\end{proof}

\section{The Gaussian case}\label{SEC gaussian}

In this last section we consider the case of Gaussian random polytopes,
and show Theorem \ref{thm gaussian}. We observe that, since the
distribution of a Gaussian vector is rotationally invariant and its
projection on any $k$-dimensional subspace is a $k$-dimensional Gaussian
vector, it is a direct consequence of the following result for the
orthogonal projection of Gaussian random vectors:

\begin{proposition}\label{prop gaussian}
Let $G_1,\dots,G_N$ be independent standard Gaussian random vectors in
$\R^k$. Then
\[
\E\max_{1\leq j\leq N}|G_j|\simeq\max\left\{\sqrt k,\sqrt{\log
N}\right\}.
\]
\end{proposition}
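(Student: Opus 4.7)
The plan is to establish the upper and lower matching bounds separately, each by elementary Gaussian techniques. The two driving facts are: (i) $\E|G_j|\simeq\sqrt{k}$, together with Gaussian concentration for $1$-Lipschitz functions, which will control the fluctuations of the maximum; and (ii) the orthogonal projection of a standard Gaussian vector onto any fixed coordinate is a standard scalar Gaussian, which reduces the $\sqrt{\log N}$ lower bound to the classical extreme-value asymptotics for i.i.d.\ scalar Gaussians.

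For the upper bound I would write
\[
\max_{1\leq j\leq N}|G_j|=\E|G_1|+\max_{1\leq j\leq N}\bigl(|G_j|-\E|G_1|\bigr),
\]
and estimate each piece. Since $\E|G_1|\leq\bigl(\E|G_1|^2\bigr)^{1/2}=\sqrt{k}$, the first summand contributes $\sqrt{k}$. For the second, the map $G\mapsto|G|$ is $1$-Lipschitz on $\R^k$, so by the standard Gaussian concentration inequality each $|G_j|-\E|G_1|$ is centered and $1$-subgaussian. A routine maximal inequality for subgaussian variables then gives $\E\max_j\bigl(|G_j|-\E|G_1|\bigr)\leq C\sqrt{\log N}$, and combining yields $\E\max_j|G_j|\lesssim\sqrt{k}+\sqrt{\log N}\simeq\max\{\sqrt{k},\sqrt{\log N}\}$.

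For the lower bound I would prove the two inequalities $\E\max_j|G_j|\gtrsim\sqrt{k}$ and $\E\max_j|G_j|\gtrsim\sqrt{\log N}$ separately and take the maximum. The first is immediate: $\max_j|G_j|\geq|G_1|$ and $\E|G_1|\simeq\sqrt{k}$ (a direct computation with the chi distribution, or Jensen plus a standard variance/median estimate). The second follows by projecting onto the first coordinate: $|G_j|\geq|\langle G_j,e_1\rangle|$, and the scalars $\langle G_j,e_1\rangle$ are i.i.d.\ standard Gaussian, for which the classical bound $\E\max_j|\langle G_j,e_1\rangle|\gtrsim\sqrt{\log N}$ holds for $N\geq 2$ (via a short truncation or Gaussian small-ball argument); the case $N=1$ is trivial since then $\sqrt{\log N}=0$.

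I do not anticipate any substantial obstacle here, as every ingredient is textbook: the only small points requiring care are matching the absolute constants in $\E|G_1|\simeq\sqrt{k}$ across the full range $k\geq 1$ and invoking Gaussian concentration in exactly the right form to produce the $\sqrt{\log N}$ scaling of the maximum of subgaussian fluctuations.
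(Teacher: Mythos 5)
Your proposal is correct, but it takes a genuinely different route from the paper. The paper argues by hand: it integrates in polar coordinates, bounds the tail $\int_t^\infty r^k e^{-r^2/2}\,dr$ via an auxiliary calculus lemma, and uses Stirling's formula to estimate $\Pro(\max_j|G_j|\le t)$ at the two test values $t=\sqrt{k}$ and $t=\sqrt{\log N}$ for the lower bound, and a split of $\E\max_j|G_j|$ at $A=\max\{2\sqrt{\log N},\sqrt{2k}\}$ for the upper bound; notably, its lower-bound argument uses $N\geq n$ and only works for $n\geq n_0$ (the probability $e^{-cN/\sqrt{k}}$ is forced to be small via $N\ge n\to\infty$). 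You instead get the upper bound from the Borell--TIS concentration inequality (each $|G_j|-\E|G_1|$ is centered and subgaussian with a dimension-free constant) plus the standard maximal inequality for $N$ subgaussian variables, and the lower bound from the two separate inequalities $\E\max_j|G_j|\ge\E|G_1|\gtrsim\sqrt{k}$ and $\E\max_j|G_j|\ge\E\max_j|\langle G_j,e_1\rangle|\gtrsim\sqrt{\log N}$ (the latter for $N\ge2$, with $N=1$ trivial). Your route is shorter, needs no restriction of the form $N\ge n$ or $n\ge n_0$, and yields uniform constants for all $k\ge1$, $N\ge1$ (the additive constants from the maximal inequality are absorbed since $\sqrt{k}\ge1$); what it costs is the reliance on Gaussian concentration and the subgaussian maximal inequality as imported tools, whereas the paper's computation is entirely self-contained -- which appears to be why the authors wrote it out, having found no reference beyond the one-dimensional case. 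The only points to make explicit in a final write-up are the ones you already flag: the uniformity of $\E|G_1|\simeq\sqrt{k}$ over all $k\ge1$ (e.g.\ via $(\E|G_1|)^2=k-\mathrm{Var}(|G_1|)\ge k-1$ together with the case $k=1$) and the exact form of the maximal inequality used.
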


Although this result is probably known by specialists, we include a proof,
since we were not able to find one in the literature except for the
1-dimensional case (see \cite{LT}). First we need the following lemma:

\begin{lemma}
Let $k\geq 1$ and $t\geq\max\bigl\{\sqrt{2(k-1)},1\bigr\}$. Then
$$
t^{k-1}e^{-\frac{t^2}{2}}\leq\int_t^\infty
r^ke^{-\frac{r^2}{2}}\,dr\leq 2t^{k-1}e^{-\frac{t^2}{2}}.
$$
\end{lemma}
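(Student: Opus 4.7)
The plan is to handle the two bounds separately, both via straightforward manipulations of the integrand.

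For the lower bound, I would exploit that $r\geq t$ together with $k\geq 1$ gives $r^{k-1}\geq t^{k-1}$, so $r^k\geq t^{k-1}\,r$. This reduces the integral to one that can be evaluated exactly:
\[
\int_t^\infty r^k e^{-r^2/2}\,dr \;\geq\; t^{k-1}\int_t^\infty r\,e^{-r^2/2}\,dr \;=\; t^{k-1}e^{-t^2/2},
\]
since $\frac{d}{dr}\bigl[-e^{-r^2/2}\bigr]=r e^{-r^2/2}$. This step needs none of the hypotheses on $t$ beyond $t\geq 0$; the assumption $t\geq 1$ in the lemma statement is only needed to guarantee the range is non-empty in the cases where $\sqrt{2(k-1)}$ is small (e.g. $k=1$).

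For the upper bound, the natural move is integration by parts, writing $r^k e^{-r^2/2}=r^{k-1}\cdot r e^{-r^2/2}$ and integrating the second factor. This gives
\[
\int_t^\infty r^k e^{-r^2/2}\,dr \;=\; t^{k-1}e^{-t^2/2} + (k-1)\int_t^\infty r^{k-2} e^{-r^2/2}\,dr.
\]
The key observation is that the hypothesis $t\geq\sqrt{2(k-1)}$ lets me absorb the remainder back into the original integral: for $r\geq t$ we have $r^2\geq 2(k-1)$, hence $(k-1)r^{k-2}\leq \tfrac{1}{2}r^k$. Substituting this into the remainder yields
\[
I \;\leq\; t^{k-1}e^{-t^2/2} + \tfrac{1}{2}\,I,
\]
where $I$ denotes the integral in question, and rearranging gives $I\leq 2t^{k-1}e^{-t^2/2}$.

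The proof is essentially a one-step bootstrap once integration by parts has been performed, so I do not anticipate any real obstacle; the only subtlety is noticing that the condition $t\geq\sqrt{2(k-1)}$ is precisely what is needed to make the leftover integral dominated by half of $I$, which is exactly what closes the recursion without iterating.
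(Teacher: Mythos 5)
Your proof is correct. The lower bound is exactly the paper's argument (bound $r^{k-1}\geq t^{k-1}$ and integrate $re^{-r^2/2}$ exactly); your side remark that $t\geq 0$ already suffices there is accurate, since $r\mapsto r^{k-1}$ is nondecreasing for $k\geq 1$. For the upper bound you take a genuinely different, though equally elementary, route: the paper sets $f(t)=2t^{k-1}e^{-t^2/2}-\int_t^\infty r^ke^{-r^2/2}\,dr$, computes $f'(t)=t^{k-2}e^{-t^2/2}\bigl(2(k-1)-t^2\bigr)$, and concludes from $f$ decreasing on $[\sqrt{2(k-1)},\infty)$ and $f(t)\to 0$ that $f\geq 0$; you instead integrate by parts and absorb the remainder term, using $(k-1)r^{k-2}\leq\tfrac12 r^k$ for $r\geq t\geq\sqrt{2(k-1)}$ to get $I\leq t^{k-1}e^{-t^2/2}+\tfrac12 I$. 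Both arguments hinge on exactly the same threshold $t^2\geq 2(k-1)$ (it is the sign condition on $f'$ in the paper, and the domination constant $\tfrac12$ in yours). Your version needs the (obvious) finiteness of $I$ to rearrange, while the paper's avoids any rearrangement by monotonicity; conversely, your integration-by-parts identity makes the source of the factor $2$ more transparent and would iterate easily if one wanted sharper constants. No gaps.
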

\begin{proof}
Since $t\geq 1$
\[
\int_t^\infty r^ke^{-\frac{r^2}{2}}\,dr\geq t^{k-1}\int_t^\infty
re^{-\frac{r^2}{2}}\,dr=t^{k-1}e^{-\frac{t^2}{2}},
\]
which shows the left inequality. In order to prove the right hand side inequality,
we consider the function
\[
f(t)=2t^{k-1}e^{-\frac{t^2}{2}}-\int_t^\infty r^ke^{-\frac{r^2}{2}}dr.
\]
Since $f^\prime(t)=t^{k-2}e^{-\frac{t^2}{2}}\bigl(2(k-1)-t^2\bigr)$,
$f(t)$ decreases if $t\geq \sqrt{2(k-1)}$ which, together with the fact
that $\lim_{t\to\infty}f(t)=0$, gives the result.
\end{proof}

\begin{proof}[Proof of Proposition \ref{prop gaussian}]
For $k=1$ the result is well known, so we assume that $k\geq 2$. Integrating in polar coordinates
and taking into account that $k\geq 2$, we have
\[
\begin{split}
\Pro\left(\max_{1\leq j\leq N}|G_j|\leq t\right) &
    =\Pro\bigl(|G_j|\leq t\bigr)^N
    =\left(k|B_2^k|\int_0^tr^{k-1}\frac{e^{-\frac{r^2}{2}}}{\bigl(\sqrt{2\pi}\bigr)^k}\,dr\right)^N\\
 & =\left(1-k|B_2^k|\int_t^\infty r^{k-1}\frac{e^{-\frac{r^2}{2}}}{\bigl(\sqrt{2\pi}\bigr)^k}\,dr\right)^N\\
 & \leq\left(1-k|B_2^k|t^{k-2}\int_t^\infty r\frac{e^{-\frac{r^2}{2}}}{\bigl(\sqrt{2\pi}\bigr)^k}\,dr\right)^N\\
 & =\left(1-k|B_2^k|t^{k-2}\frac{e^{-\frac{t^2}{2}}}{\bigl(\sqrt{2\pi}\bigr)^k}\right)^N
    =e^{N\log\left(1-k|B_2^k|t^{k-2}\frac{e^{-\frac{t^2}{2}}}{(\sqrt{2\pi})^k}\right)}\\
 & \leq e^{-Nk|B_2^k|t^{k-2} \frac{e^{-\frac{t^2}{2}}}{(\sqrt{2\pi})^k}}.
\end{split}
\]
Now, using the well-known value of the volume of the Euclidean unit ball,
namely, $|B_2^k|=\pi^{k/2}/\Gamma\bigl(1+k/2\bigr)$, by Stirling's formula
we have that there exists an absolute constant $c$ such that
\[
\Pro\left(\max_{1\leq j\leq N}|G_j|\leq t\right)\leq
e^{-c\frac{N\sqrt{k}}{t^2}\left(\frac{\sqrt et}{\sqrt k}\right)^k
e^{-\frac{t^2}{2}}}.
\]
Thus, taking $t=\sqrt k$ we obtain that
\[
\Pro\left(\max_{1\leq j\leq N}|G_j|\leq \sqrt k\right)\leq
e^{-c\frac{N}{\sqrt k}}\leq e^{-c\sqrt n},
\]
which tends to 0 when $n\to\infty$. Hence, there exists $n_0\in \N$ such
that if $n\geq n_0$,
\[
\Pro\left(\max_{1\leq j\leq N}|G_j|>\sqrt k\right)\geq\frac{1}{2}
\]
and so, if $n\geq n_0$,
\[
\E\max_{1\leq j\leq N}|G_j|\geq\sqrt{k}\,\Pro\left(\max_{1\leq j\leq
N}|G_j|\geq\sqrt k\right)>\frac{\sqrt k}{2}.
\]
This shows the lower estimate for the expectation when $k\geq e\log N$. Otherwise, taking
$t=\sqrt{\log N}$ we obtain
\[
\Pro\left(\max_{1\leq j\leq N}|G_j|\leq \sqrt{\log N}\right)\leq
e^{-c\frac{\sqrt N\sqrt{k}}{\log N}\left(\frac{\sqrt{e\log N}}{\sqrt
k}\right)^k }\leq e^{-c\frac{\sqrt N}{\log N}},
\]
because $1\leq k\leq e\log N$. Thus, there exists $n_0\in \N$ such that if
$n\geq n_0$,
\[
\Pro\left(\max_{1\leq j\leq N}|G_j|\geq \sqrt{\log N}\right)
\geq\frac{1}{2}
\]
and therefore, if $n\geq n_0$ and $1\leq k\leq e\log N$,
\[
\E\max_{1\leq j\leq N}|G_j|\geq\sqrt{\log N}\,\Pro\left(\max_{1\leq
j\leq N}|G_j|\geq\sqrt{\log N}\right)\geq\frac{\sqrt{\log N}}{2}.
\]
On the other hand, for any $A>0$,
\[
\begin{split}
\E\max_{1\leq j\leq N}|G_j| &
    =\int_0^\infty\Pro\left(\max_{1\leq j\leq N}|G_j|\geq t\right)\,dt\\
 & =\int_0^A\Pro\left(\max_{1\leq j\leq N}|G_j|\geq t\right)\,dt
    +\int_A^\infty\Pro\left(\max_{1\leq j\leq N}|G_j|\geq t\right)\,dt\\
 & \leq A+N\int_A^\infty\Pro\bigl(|G_1|\geq t\bigr)\,dt\\
 & =A+Nk\,|B_2^k|\int_A^\infty\int_t^\infty r^{k-1}\frac{e^{-\frac{r^2}{2}}}{\bigl(\sqrt{2\pi}\bigr)^k}\,dr\,dt.
\end{split}
\]
Taking $A=\max\left\{2\sqrt{\log N},\sqrt{2k}\right\}$ we get that
\[
\begin{split}
\E\max_{1\leq j\leq N}|G_j| &
    \leq A+2Nk\,|B_2^k|\int_A^\infty t^{k-2}\frac{e^{-\frac{t^2}{2}}}{\bigl(\sqrt{2\pi}\bigr)^k}\,dt\\
 & \leq A+4Nk\,|B_2^k|A^{k-3}\frac{e^{-\frac{A^2}{2}}}{\bigl(\sqrt{2\pi}\bigr)^k}
    \leq A+ \frac{CN\sqrt k}{A^3}\left(\frac{\sqrt e A}{\sqrt{k}}\right)^ke^{-\frac{A^2}{2}}.
\end{split}
\]
If $A=2\sqrt{\log N}$ it can be checked that
\[
\E\max_{1\leq j\leq N}|G_j|\leq 2\sqrt{\log N}+\frac{C}{N\log N}
2\sqrt{e}\sqrt{\log N}\leq C\sqrt{\log N}.
\]
Finally, if $A=\sqrt{2k}$, then
\[
\E\max_{1\leq j\leq N}|G_j|
\leq\sqrt{2k}+\frac{CN}{k}e^{-\frac{k}{2}}\leq\sqrt{2k}+\frac{C}{k}\leq
C\sqrt{k}.\qedhere
\]
\end{proof}


\begin{thebibliography}{10}

\bibitem{A}{\sc D. Alonso-Guti\'errez}, On the isotropy constant of
random convex sets, {\it Proc. Amer. Math. Soc.} {\bf 136} (9) (2008),
3293--3300.

\bibitem{AP1}{\sc D. Alonso-Guti\'errez, J. Prochno}, Estimating Support
Functions of Random Polytopes via Orlicz Norms, {\it Discrete Comput.
Geom.}, DOI 10.1007/s00454-012-9468-7, 2012.

\bibitem{AP2}{\sc D. Alonso-Guti\'errez, J. Prochno}, On the Gaussian
Behavior of Marginals and the Mean Width of Random Polytopes, {\it
preprint} (2012).

\bibitem{Ba}{\sc I. B\'ar\'any}, Random polytopes, convex bodies,
and approximation, {\it Stochastic Geometry, Lecture Notes in Math.} {\bf
1892} (2007), 77--118.

\bibitem{BH92} {\sc U. Betke, M. Henk}, Estimating sizes of a convex body by
successive diameters and widths, {\it Mathematika} {\bf 39} (2) (1992),
247--257.

\bibitem{B}{\sc K.-H. Borgwardt}, {\it The simplex method. A probabilistic
analysis},  Algorithms and Combinatorics: Study and Research Texts 1,
Springer-Verlag, Berlin, 1987.

\bibitem{Br05} {\sc R. Brandenberg}, Radii of regular polytopes, {\it Discrete
Comput. Geom.} {\bf 33} (1) (2005), 43--55.

\bibitem{BrT06} {\sc R. Brandenberg, T. Theobald}, Radii minimal projections of
polytopes and constrained optimization of symmetric polynomials, {\it Adv.
Geom.} {\bf 6} (1) (2006), 71--83.

\bibitem{DGG}{\sc N. Dafnis, A. Giannopoulos, O. Gu\'edon}, On the
isotropic constant of random polytopes, {\it Adv. Geom.} {\bf 10} (2010),
311--32.

\bibitem{DGT1} {\sc N. Dafnis, A. Giannopoulos, A. Tsolomitis},
Asymptotic shape of a random polytope in a convex body, {\it J. Funct.
Anal.} {\bf 257} (2009), 2820--2839.

\bibitem{DGT2} {\sc N. Dafnis, A. Giannopoulos, A. Tsolomitis}, Querma\ss
integrals and asymptotic shape of random polytopes in an isotropic convex
body, {\it Michigan Math. J.} (to appear).

\bibitem{Gi} {\sc A.\,Giannopoulos}, {\it Notes on isotropic convex bodies},
Warsaw University Notes, 2003.

\bibitem{GM} {\sc A. Giannopoulos, V. Milman}, Concentration property
on probability spaces, {\it Adv. Math.} {\bf 156} (2000), 77--106.

\bibitem{GHC} {\sc B. Gonz\'alez, M. A. Hern\'andez Cifre}, Successive radii and
Minkowski addition, {\it Monatsh. Math.} {\bf 166} (2012), 395--409.

\bibitem{GHCH} {\sc B. Gonz\'alez, M. A. Hern\'andez Cifre, A. Hinrichs},
Successive radii of families of convex bodies, {\it Submitted} (2012).

\bibitem{GK2} {\sc P. Gritzmann, V. Klee}, Computational complexity of inner and
outer $j$-radii of polytopes in finite-dimensional normed spaces, {\it
Math. Program.} {\bf 59} (1993) 163--213.

\bibitem{HHC} {\sc M. Henk, M. A. Hern\'andez Cifre}, Intrinsic volumes and
successive radii, {\it J. Math. Anal. Appl.} {\bf 343} (2) (2008),
733--742.

\bibitem{HHC2} {\sc M. Henk, M. A. Hern\'andez Cifre}, Successive minima and
radii, {\it Canad. Math. Bull.} {\bf 52} (3) (2009), 380--387.

\bibitem{KLS} {\sc R. Kannan, L. Lovász, M. Simonovits}, Isoperimetric
problems for convex bodies and a localization lemma, {\it Discrete Comp.
Geom.} {\bf 3-4} (1995), 541--559.

\bibitem{KK} {\sc B. Klartag, G. Kozma}, On the hyperplane conjecture
for random convex sets, {\it Israel J. Math.} {\bf 170} (2009), 253-–268.

\bibitem{KM} {\sc B. Klartag, E. Milman}, Centroid bodies and the logarithmic
Laplace transform - a unified approach, {\it J. Funct. Anal.} {\bf 262}
(1) (2012), 10-–34.

\bibitem{LT} {\sc M. Ledoux, M. Talagrand}, {\it Probability in Banach
spaces. Isoperimetry and processes}, Reprint of the 1991 edition. Classics
in Mathematics, Springer-Verlag, Berlin, 2011.

\bibitem{MP}{\sc V. D. Milman, A. Pajor}, Isotropic position and inertia
ellipsoids and zonoids of the unit ball of a normed $n$-dimensional space,
{\it Geometric aspects of Functional Analysis, Lecture Notes in Math.}
{\bf 1376} (1989), 64--104.

\bibitem{P0}{\sc G. Paouris}, $\psi_2$-estimates for linear functionals on
zonoids, {\it Geometric aspects of Functional Analysis, Lecture Notes in
Math.} {\bf 1807} (2003), 211--222.

\bibitem{P1}{\sc G. Paouris}, Concentration of mass on convex bodies, {\it
Geom. Funct. Anal.} {\bf 16} (5) (2006), 1021-–1049.

\bibitem{P2}{\sc G. Paouris}, Small ball probability estimates for
log-concave measures, {\it Trans. Amer. Math. Soc.} {\bf 364} (2012),
287--308.

\bibitem{PP}{\sc G. Paouris, P. Pivovarov}, A probabilistic take on
isoperimetric-type inequalities, {\it Adv. Math.} {\bf 230} (3) (2012),
1402-–1422.

\bibitem{PSh}{\sc F. P. Preparata, M. I. Shamos}, {\it Computational
geometry. An introduction}, Texts and Monographs in Computer Science,
Springer-Verlag, New York, 1985.

\bibitem{Pu} {\sc S. V. Puhov}, Inequalities for the Kolmogorov and Bern\v
ste\u\i n widths in Hilbert space, {\it Math. Notes} {\bf 25} (4) (1979),
320--326.

\bibitem{Re}{\sc M. Reitzner}, Random polytopes, {\it New perspectives in
stochastic geometry}, Oxford University Press, Oxford, 2010, 45--76.

\bibitem{RSu}{\sc A. R\'enyi, R. Sulanke}, \"Uber die konvexe H\"ulle von
$n$ zuf\"allig gew\"ahlten Punkten, {\it Z. Wahrsch. Verw. Geb.} {\bf 2}
(1963), 75--84.

\bibitem{S}{\sc H. Solomon}, Geometric probability, {\it CBMS—Regional
Conference Series in Applied Mathematics} {\bf 28}, Society for Industrial
and Applied Mathematics, Philadelphia, Pa., 1978.

\end{thebibliography}
\end{document}